\documentclass[a4papersize]{article}

\usepackage{amsmath,amsthm, amsxtra,amssymb,latexsym, amscd}
\usepackage[mathscr]{eucal}
\usepackage{color}
\newtheorem{theorem}{Theorem}[section]
\newtheorem{corollary}[theorem]{Corollary}
\newtheorem{lemma}[theorem]{Lemma}

\newtheorem{example}[theorem]{Example}

\DeclareMathOperator{\depth}{depth}

\DeclareMathOperator{\Ann}{Ann}

\DeclareMathOperator{\Ass}{Ass}
\DeclareMathOperator{\Supp}{Supp}
\DeclareMathOperator{\Var}{Var}
\DeclareMathOperator{\nCM}{nCM}

\DeclareMathOperator{\Att}{Att}
\DeclareMathOperator{\Spec}{Spec}

\begin{document}
\large
\centerline{\Large {\bf  COHEN-MACAULAYNESS OF FORMAL FIBERS }}
\smallskip
\centerline{\Large {\bf  AND DIMENSION OF LOCAL COHOMOLOGY MODULES }}
\smallskip

\medskip
\vskip 0.7cm

\medskip

\vskip 0.7cm
\centerline { TRAN DO MINH CHAU}
\centerline {Thai Nguyen University of Education}
\centerline{Thai Nguyen, Vietnam}
\centerline {E-mail: chautdm@tnue.edu.vn}
\vskip 0.4cm

\medskip
\vskip 0.7cm

\noindent{\bf Abstract} {\footnote{ {\it{Key words and phrases: }} Cohen-Macaulayness; formal fiber; local cohomology module. \hfill\break
  {\it{2000 Subject  Classification: }} 13D45, 13H10.\\ The work is funded by the Ministry of Education and Training of Vietnam under grant number B2026-CTT-08}.  Let $(R, \frak m )$ be a Noetherian local ring, $M$ a finitely generated $R$-module of dimension $d$. Set  $\frak a(M):=\frak a_0(M)\cdots \frak a_{d-1}(M)$, where $\frak a_i(M):=\Ann_RH^i_{\frak m}(M)$ for $i\geq 0$. In this paper, we study the Cohen-Macaulayness of formal fibers of $R$ in the relation with the dimension $\dim (R/\frak a(M)).$ We prove that $\dim (R/\frak a(M))<d$ if and only if $R/\frak p$ is unmixed and the generic formal fiber of $R/\frak p$ is Cohen-Macaulay for all $\frak p\in\Supp_R(M)$ with $\dim (R/\frak p)=d.$ In general,   $R/\frak p$ is unmixed and the generic formal fiber of  $R/\frak p$ is Cohen-Macaulay for all $\frak p\in\Supp_R(M)$ with $\dim (R/\frak p)>\dim (R/\frak a(M)).$  As applications, we explore the structure of local rings and the dimension, the closedness of non Cohen-Macaulay locus of finitely generated modules.

\section{Introduction}
\ \ \ \ Throughout this paper, let $(R,\frak m)$ be a Noetherian local ring, $M$ a finitely generated $R$-module with $\dim_R(M)=d.$ Denote by $\widehat{R}$ and $\widehat{M}$ the $\frak m$-adic completions of $R$ and $M$ respectively.

For $\frak p\in\Spec(R)$, recall that $\widehat{R}\otimes_Rk(\frak p)$ is called the {\it formal fiber ring} of $R$ at $\frak p$ and the spectrum of $\widehat{R}\otimes_Rk(\frak p)$ is called {\it the formal fiber} of $R$ at $\frak p$, where   $k(\frak p):=R_{\frak p}/\frak pR_{\frak p}$ is the residue field.  Note that there is a natural bijection between the formal fiber of $R$ at $\frak p$ and the inverse image of $\frak p$ via the induced map $\Spec(\widehat R)\to \Spec(R).$  If $R$ is a domain then the formal fiber of $R$ at the prime ideal $0$ is called the {\it generic formal fiber} of $R$.  The precise definition of formal fiber  was given  by Grothendieck \cite{G1} in the early 1960s, where  the research on formal fibers continued to develope after the works by  Zariski and Nagata in the 1950s (see \cite{Mat}). 

Many different problems on formal fibers have attracted the interest of mathematicians. About the dimension of formal fibers,  Matsumura \cite{Mat1} proposed to study the number $\alpha(R)$ defined as the maximum of the dimensions of all formal fiber rings of $R$ at prime ideals, he gave examples of local rings $R$ with $\alpha (R)=i$ for each $i\in\{0, \dim(R)-1, \dim(R)-2\}.$ Then Rotthaus \cite{Rott} constructed local rings $R$ with $\alpha (R)=i$ for each $i\in\{1, \ldots, \dim(R)-3\}.$ Regarding the structure of local rings, Heinzer-Rotthaus-Sally \cite{HRS} studied the interrelatedness of properties of the generic formal fiber of a Noetherian local domain $(R,\frak m)$ with the existence of certain Noetherian local domains $C$  birationally dominating $R$ and having $C/\frak mC$ as a finitely generated $R$-module. Charters-Loepp \cite{CL} established a necessary and sufficient condition for the existence of a Noetherian local domain $R$ with a given completion ring and given generic formal fiber. Recently, Z\"{o}schinger and Cuong \cite{Zos, DTC}  investigated local rings $R$ with $\alpha(R)=0$ in order to give characterizations of Weierstrass preparation type and going up property  for the natural homomorphism $R\to \widehat R$. About the Cohen-Macaulayness of formal fibers,  Avramov-Foxby \cite[Theorem 5.3]{AF} proved that if all formal fibers of $R$ are Cohen-Macaulay then so are all formal fibers of $S$ for any local homomorphism $R\to S$ of finite flat dimension. This provided a general solution of  Localization Problem considered by Grothendieck \cite{G1}. 

In this paper, we study the Cohen-Macaulayness of formal fibers in the relation with the dimension of local cohomology modules. Then we apply to study the structure of local rings and the dimension, the closedness of non Cohen-Macaulay locus of finitely generated modules.

From now on, for a finitely generated $R$-module $M$ of dimension $d$ and an integer $i\geq 0$, set $\frak a_i(M):=\Ann_RH^i_{\frak m}(M)$ and
$\frak a(M):=\frak a_0(M)\cdots \frak a_{d-1}(M).$  Following Nagata \cite{Na}, $M$ is said to be {\it unmixed} if $\dim (\widehat R/\frak P)=d$ for all $\frak P\in\Ass_{\widehat R}(\widehat M).$ Note that the definition of the unmixedness used in this paper differs from that in \cite[Page 136]{Mat}. For example, the Noetherian local domain  in Example \ref{E:2a} is unmixed in sense of \cite{Mat}, but not unmixed in sense of \cite{Na}.

The following theorem is the first main result of this paper, that gives some criteria for the Cohen-Macaulayness of formal fibers of $R$ in relation with dimension $\dim (R/\frak a(M))$.

\begin{theorem} \label{T:1a}  The following statements are true.
	\begin{itemize}
		\item [{\rm (a)}] $\dim (R/\frak a(M))<d$ if and only if $R/\frak p$ is unmixed and the formal fiber of  $R$ at $\frak p$ is Cohen-Macaulay for all $\frak p\in\Supp_R(M)$ with $\dim (R/\frak p)=d$.
		\item[{\rm (b)}]  If $\frak p\in\Supp_R(M)$ with $\dim(R/\frak p)>\dim (R/\frak a(M))$ then  $R/\frak p$ is unmixed and the formal fiber of  $R$ at $\frak p$ is Cohen-Macaulay.
	\end{itemize}
\end{theorem}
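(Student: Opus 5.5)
Part (b) will be derived from part (a) applied to a suitable module, and (a) from known results relating $\dim(R/\frak a(M))$ to completion and formal fibers. The standard tool is the chain
$$\dim(R/\frak a(M))\ \ge\ \dim(\widehat R/\frak a(\widehat M))\ \ge\ \dim(\nCM(\widehat M))\ \text{(up to the unmixed-part correction)}$$
together with Nagata's characterization: $R/\frak p$ is unmixed iff $\frak p\widehat R$ has no embedded or lower-dimensional associated primes. I will also use that $\frak a_i(\widehat M)=\frak a_i(M)\widehat R$, since $H^i_{\frak m}(M)\cong H^i_{\frak m\widehat R}(\widehat M)$ and these are Artinian.

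\emph{Proof of (a), direction ($\Rightarrow$).} Let $\frak p\in\Supp(M)$ with $\dim R/\frak p=d$. Since $\dim R/\frak a(M)<d$, we have $\frak a(M)\not\subseteq\frak p$. Pick $x\in\frak a(M)\setminus\frak p$. Localizing at $\frak p$ inverts $x$, and $x$ kills $H^i_{\frak m}(M)$ for $i<d$. By the standard Schenzel-type argument (local duality over $\widehat R$, or a quotient of a Cohen--Macaulay ring), $x\widehat R$ annihilates the deficiency modules $K^i(\widehat M)$ for $i<d$. Hence for every $\frak P\in\Spec\widehat R$ with $\frak P\cap R=\frak p$, the module $\widehat M_{\frak P}$ is Cohen--Macaulay and equidimensional with all associated primes of dimension $d$. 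The localization $\widehat M_{\frak P}$ is faithfully flat over $M_{\frak p}$ with closed fiber $\widehat M_{\frak P}\otimes k(\frak p)$. Now $M_{\frak p}$ has finite length over $R_{\frak p}$ (as $\frak p$ is minimal in $\Supp M$). So the Cohen--Macaulayness of $\widehat M_{\frak P}$ descends to Cohen--Macaulayness of the fiber ring $\widehat R_{\frak P}/\frak p\widehat R_{\frak P}$, via $\depth$ and $\dim$ additivity for flat local maps. To get unmixedness of $R/\frak p$, I show every $\frak P\in\Ass(\widehat R/\frak p\widehat R)$ lies over $\frak p$, hence is in $\Ass(\widehat M)$ after localization. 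The condition that $\widehat M$ has no associated primes of dimension $<d$ outside $V(\frak a(\widehat M))$ then forces $\dim\widehat R/\frak P=d$.

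\emph{Proof of (a), direction ($\Leftarrow$).} Suppose $\dim R/\frak a(M)\ge d$. Then some $\frak p\supseteq\frak a(M)$ has $\dim R/\frak p=d$, and $\frak p\in\Supp M$ because $\frak a(M)\supseteq\Ann M$ up to radical. Choose a prime $\frak P$ of $\widehat R$ minimal over $\frak a(M)\widehat R+\frak p\widehat R$ of dimension $d$. Unmixedness of $R/\frak p$ puts $\frak P$ minimal over $\frak p\widehat R$, and Cohen--Macaulayness of the fiber, combined with $M_{\frak p}$ having finite length, makes $\widehat M_{\frak P}$ Cohen--Macaulay of dimension $0$ with $\dim\widehat R/\frak P=d$. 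Using the formula $\dim K^i(\widehat M)_{\frak P}$ in terms of $K^{i-\dim\widehat R/\frak P}(\widehat M_{\frak P})$, this gives $\frak P\notin\Supp K^i(\widehat M)$ for $i<d$, hence $\frak a(M)\widehat R\not\subseteq\frak P$, a contradiction.

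\emph{Proof of (b).} Let $\frak p\in\Supp M$ with $\dim R/\frak p=t>\dim R/\frak a(M)$. Set $N=M/\Gamma$, where $\Gamma$ is a submodule chosen so that $\frak p$ is of maximal dimension in $\Supp N$; for example take $N=R/\frak p$-component via a prime filtration, or simply pass to $N:=M/H^0_{\frak q}$-type truncations. I would try $N=M/U$ with $U$ the largest submodule of dimension $<t$... but it is cleaner to compare $\frak a(N)$ with $\frak a(M)$. The aim is an inequality $\dim R/\frak a(N')<t$ for a module $N'$ of dimension $t$ with $\frak p\in\Supp N'$, and then to apply (a).

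The main obstacle is this comparison of annihilators when passing from $M$ to a module of dimension $t$, since local cohomology in degrees $\ge t$ interferes. I would handle it by localizing instead. Over $\widehat R$, take $\frak P$ over $\frak p$ and work with $\widehat M_{\frak P}$ directly, running the argument of (a) at $\frak P$ using $\frak a(M)\not\subseteq\frak p$. That argument only used the local vanishing of $K^i(\widehat M)_{\frak P}$ in the relevant range, which the choice $x\in\frak a(M)\setminus\frak p$ still provides.
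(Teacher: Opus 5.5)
\textbf{The gap is in (a), direction ($\Leftarrow$).} Your last step reads ``$\frak P\notin\Supp K^i(\widehat M)$ for $i<d$, hence $\frak a(M)\widehat R\not\subseteq\frak P$''. The first part only gives $\frak a_i(\widehat M)=\Ann_{\widehat R}K^i(\widehat M)\not\subseteq\frak P$. To get the second part you need the identity $\frak a_i(\widehat M)=\frak a_i(M)\widehat R$ that you assert at the start, and that identity is false. In general one only has $\frak a_i(M)\widehat R\subseteq\frak a_i(\widehat M)$ and $\frak a_i(M)=\frak a_i(\widehat M)\cap R$.

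Example \ref{E:2a} shows the failure. There $\frak a_1(R)=0$, but $\frak a_1(\widehat R)\neq 0$: $\widehat R$ is a quotient of a Cohen--Macaulay ring, so by Lemma \ref{L:1b}(b) every attached prime of $H^1_{\frak m\widehat R}(\widehat R)$ has dimension at most $1$. More generally, if your identity held, then $\dim(R/\frak a(M))=\dim(\widehat R/\frak a(\widehat M))<d$ for every $M$, and the theorem would have no content.

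You can also see the problem from the fact that your argument never uses either hypothesis. Since $\frak a(M)\subseteq\frak p$, you have $\frak a(M)\widehat R+\frak p\widehat R=\frak p\widehat R$. A minimal prime $\frak P$ of $\frak p\widehat R$ with $\dim(\widehat R/\frak P)=d$ always exists. At such a prime the fiber $\widehat R_{\frak P}/\frak p\widehat R_{\frak P}$ is Artinian, hence automatically Cohen--Macaulay. So your argument would prove $\dim(R/\frak a(M))<d$ unconditionally, contradicting Examples \ref{E:2a} and \ref{E:2b}.

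\textbf{The missing idea} is to work at the prime of $\widehat R$ where the defect actually sits, and that prime is not minimal over $\frak p\widehat R$.
\begin{itemize}
\item Take $i<d$ with $\dim(R/\frak a_i(M))=d$, and $\frak p$ minimal over $\frak a_i(M)$ with $\dim(R/\frak p)=d$. Then $\frak p\in\Att_RH^i_{\frak m}(M)$.
\item By Lemma \ref{L:1a}(b) there is $\frak P\in\Att_{\widehat R}H^i_{\frak m\widehat R}(\widehat M)$ with $\frak P\cap R=\frak p$. This prime has $\dim(\widehat R/\frak P)\le i<d$.
\item Your localization formula for deficiency modules then gives $H^{i-\dim(\widehat R/\frak P)}_{\frak P\widehat R_{\frak P}}(\widehat M_{\frak P})\neq 0$.
\item Unmixedness of $R/\frak p$ and catenarity of $\widehat R$ give $\dim\widehat M_{\frak P}=\dim(\widehat R_{\frak P}/\frak p\widehat R_{\frak P})=d-\dim(\widehat R/\frak P)>i-\dim(\widehat R/\frak P)$. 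So $\widehat M_{\frak P}$ is not Cohen--Macaulay.
\item Since $M_{\frak p}$ has finite length, the fiber at $\frak P$ is not Cohen--Macaulay, which is the required contradiction.
\end{itemize}
This is where both hypotheses enter, and it is the paper's argument.

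\textbf{The other parts.} Your ($\Rightarrow$) direction is sound. It is essentially the paper's argument, written with deficiency modules instead of attached primes and the shifted localization principle.

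Your localization route to (b) is a viable alternative to the paper's induction (cutting by a parameter $x\in\frak p\cap\frak a(M)^3$ and using Lemma \ref{L:1d}). However, its unmixedness step cannot just rerun (a). In (b), $\frak p$ need not be minimal in $\Supp_R(M)$, so neither finite length of $M_{\frak p}$ nor $\frak P\in\Ass_{\widehat R}(\widehat M)$ is available. What $x\in\frak a(M)\setminus\frak p$ does give is that $\widehat M_{\frak P}$ is Cohen--Macaulay of dimension $d-\dim(\widehat R/\frak P)$ for every $\frak P$ over $\frak p$. Depth/dimension additivity then forces both $M_{\frak p}$ and the fiber to be Cohen--Macaulay. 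Hence every $\frak P\in\Ass(\widehat R/\frak p\widehat R)$ is minimal over $\frak p\widehat R$, with $\dim(\widehat R/\frak P)=d-\dim M_{\frak p}$. Comparing with a minimal prime of $\frak p\widehat R$ of dimension $\dim(R/\frak p)$ gives unmixedness.
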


We denote by $\nCM(M)$ the non Cohen-Macaulay locus of $M$, which is defined as follows
 $$\nCM(M)=\{\frak p\in\Spec(R)|~M_{\frak p}~\text{is not Cohen-Macaulay}\}.$$ 
   In the following cases, $\nCM(M)$ is  a closed subset of $\Spec(R)$ under Zariski topology: 
   $R$ is an excellent ring (see \cite{G1}); $R$ is a quotient of a Gorenstein local ring  (see \cite{Sh1, Sc1,C});
   	 $R$ is a quotient of a Cohen-Macaulay local ring (see \cite{CNN}); all formal fibers of $R$ are Cohen-Macaulay (see \cite{DJ}).   
   	  
  Using Theorem \ref{T:1a}, we examine  the structure of the base ring and the dimension, the closedness of the non Cohen-Macaulay locus $\nCM(M)$. Following Schenzel \cite{Sch} (see also \cite{CN}), the filtration of submodules $H^0_{\frak m}(M)=D_t\subset \ldots \subset D_1\subset D_0=M$ is called {\it the dimension filtration} of $M$ if each $D_i$ is the largest submodule of $M$ of dimension less than $\dim_R(D_{i-1})$. Note that the dimension filtration of $M$ always exists uniquely. The following theorem is the second main result of this paper.
   
  \begin{theorem} \label{T:2a}  Let $H^0_{\frak m}(M)=D_t\subset \ldots \subset D_1\subset D_0=M$ be the dimension filtration of $M$. For each $i\geq 1,$ set ${\cal D}_i:=D_{i-1}/D_i$. The following statements are true.
  	\begin{itemize}
  		\item [{\rm (a)}] If $\dim (R/\frak a({\cal D}_i))\leq 1$ for all $i$ then $R/\Ann_R(M)$ is a quotient of a Cohen-Macaulay local ring, $\nCM(M)$ is closed under Zariski topology and $\dim(\nCM(M))=\dim(\nCM(\widehat M)).$
  		\item[{\rm (b)}]  If $\dim (R/\frak a({\cal D}_i))\leq 2$ for all $i$ then $R/\Ann_R(M)$ is catenary and $\nCM(M)$ is closed under Zariski topology.
  	\end{itemize}
  \end{theorem}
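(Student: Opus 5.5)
We deduce both parts of Theorem \ref{T:2a} from Theorem \ref{T:1a}, applied to each unmixed layer ${\cal D}_i$, together with known criteria for a ring to be a quotient of a Cohen-Macaulay local ring (Kawasaki's theorem) and for the closedness of $\nCM$.

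The first step is to pass from $M$ to the layers. Since $\nCM(M)$ is determined by the modules ${\cal D}_i$ and localizations of the dimension filtration are again dimension filtrations, we expect
$$\nCM(M)=\bigcup_i \nCM({\cal D}_i)\cup \{\text{primes where the filtration fails to be sequentially CM}\}.$$
We will instead work directly with $\Supp(M)=\bigcup_i \Supp({\cal D}_i)$ and $\Ann_R(M)$, using the fact that $\sqrt{\Ann_R M}=\bigcap_i \sqrt{\Ann_R {\cal D}_i}$. Each ${\cal D}_i$ has $\Ass({\cal D}_i)=\{\frak p\in\Ass M:\dim R/\frak p=\dim {\cal D}_i\}$.

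For each $i$, set $d_i=\dim {\cal D}_i$. By Theorem \ref{T:1a}(b), every $\frak p\in\Supp({\cal D}_i)$ with $\dim R/\frak p>\dim R/\frak a({\cal D}_i)$ has $R/\frak p$ unmixed and a Cohen-Macaulay formal fiber at $\frak p$. Under hypothesis (a) this covers all $\frak p$ with $\dim R/\frak p\ge 2$. Primes with $\dim R/\frak p\le 1$ are harmless: the formal fiber at $\frak m$ is a field, and at a prime of dimension one it is zero-dimensional, hence Cohen-Macaulay. Moreover $R/\frak p$ of dimension $1$ is unmixed after removing embedded components, and a one-dimensional domain is always unmixed in Nagata's sense, since $\Ass\widehat{R/\frak p}$ consists of minimal primes of dimension $1$. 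Thus $S=R/\Ann_R(M)$ has all formal fibers Cohen-Macaulay and all $S/\frak p$ unmixed. By Ratliff, unmixedness of every $S/\frak p$ gives universal catenarity. By Kawasaki's characterization, a local ring is a quotient of a Cohen-Macaulay local ring if and only if it is universally catenary and all its formal fibers are Cohen-Macaulay. So $S$ is a quotient of a Cohen-Macaulay ring. Closedness of $\nCM(M)$ then follows from \cite{CNN}, or alternatively from \cite{DJ}.

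For the dimension equality, we use that when all formal fibers are Cohen-Macaulay, $\widehat M_{\frak P}$ is Cohen-Macaulay if and only if $M_{\frak p}$ is, for $\frak p=\frak P\cap R$, by flat base change with CM closed fiber. Hence $\nCM(\widehat M)$ is the preimage of $\nCM(M)$. Going down, together with $\dim\widehat R/\frak p\widehat R=\dim R/\frak p$, gives equality of dimensions, provided the fibers over primes of $\nCM(M)$ contribute no extra dimension. Here we use that $\widehat{R/\frak p}$ is equidimensional, by unmixedness, for the relevant primes.

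For (b), the same argument covers primes with $\dim R/\frak p\ge 3$. For $\dim R/\frak p\le 2$ we only need catenarity of $S$, and a local ring of dimension at most $2$ whose relevant quotients are domains is always catenary. So the only issue is chains passing through primes of dimension $\ge3$, where unmixedness (Ratliff) guarantees the chain-length condition. We conclude that $S$ is catenary. Closedness comes from the Cohen-Macaulayness of the formal fibers at primes of dimension $\ge3$, together with the fact that, in a catenary ring, the locus where the low-dimensional fibers misbehave is a finite set of primes of small dimension. We would combine a Nagata-type openness criterion with this finiteness.

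The main obstacle is this last step: controlling $\nCM$ when the formal fibers at primes of dimension two may fail to be Cohen-Macaulay. We would try to show that only finitely many such dimension-two primes are relevant, namely the minimal primes of $\frak a({\cal D}_i)$, so that the union remains closed.
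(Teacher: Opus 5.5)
The genuine gap is the closedness of $\nCM(M)$ in (b), which you do not prove; your last paragraph is only a plan. Your treatment of (a), and of the catenarity in (b), follows the paper and is fine: Theorem \ref{T:1}(b) applied to each layer ${\cal D}_i$, the low-dimensional primes handled by hand, then Ratliff and Kawasaki (Lemma \ref{L:1c}), with the dimension equality coming from Lemma \ref{P:1e}. Your hedge there is unnecessary, since $\dim\widehat R/\frak P\le\dim R/(\frak P\cap R)$ always.

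In your plan for (b), the ``fact'' that primes with badly behaved low-dimensional formal fibers are finitely many is unsupported. A Nagata-type criterion would need exactly the finiteness you have not shown. The difficulty is also misplaced. For a module $N$ with $k=\dim(R/\frak a(N))\le 2$, one has $\dim\nCM(N)\le k$ by \cite{NNK}. What must be controlled is the a priori infinite set of minimal non-Cohen--Macaulay primes of dimensions $k$ and $k-1$. Beyond trivially zero-dimensional fibers, nothing about formal fibers at primes of dimension $\le k$ is needed.

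The paper's key step (Corollary \ref{C:2b}) runs as follows. Given $\frak p\in\min\nCM(N)$ with $\dim R/\frak p\ge k-1$, take $\frak P$ minimal over $\frak p\widehat R$ with $\dim\widehat R/\frak P=\dim R/\frak p$; then $\frak P\in\nCM(\widehat N)$ by Lemma \ref{L:bs}. Take $\frak Q\in\min\nCM(\widehat N)$ with $\frak Q\subseteq\frak P$. If $\frak Q\neq\frak P$, then $\frak q=\frak Q\cap R\subsetneq\frak p$ and $\dim R/\frak q\ge k$. Either $\dim R/\frak q=k$ and $\frak Q$ is minimal over $\frak q\widehat R$, or $\dim R/\frak q>k$ and Theorem \ref{T:1}(b) makes the fiber Cohen--Macaulay. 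In both cases $\frak q\in\nCM(N)$, contradicting minimality. So $\frak p$ is the contraction of one of the finitely many elements of $\min\nCM(\widehat N)$. Since $k\le 2$, this covers all of $\min\nCM(N)$ except possibly $\frak m$. Applied to $N={\cal D}_j$, each $\min\nCM({\cal D}_j)$ is finite.

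You also drop the reduction from $M$ to its layers that your first paragraph sets up. It is needed, and it is where catenarity enters. Let $P\in\Supp_R(M)\setminus\{\frak m\}$. Each nonzero $({\cal D}_j)_P$ has all associated primes of dimension $d_j-\dim R/P$, because $R/\frak p'$ is a catenary local domain for $\frak p'\in\Ass_R({\cal D}_j)\subseteq\Ass_R(M)$. Hence $M_P$ can be Cohen--Macaulay only if exactly one layer survives at $P$, and then $M_P\cong({\cal D}_j)_P$. Consequently each $\frak p\in\min\nCM(M)\setminus\{\frak m\}$ satisfies one of the following:
\begin{itemize}
\item[(i)] $\frak p\in\min\nCM({\cal D}_j)$ for some $j$;
\item[(ii)] $\frak p$ is minimal over $\frak p_1+\frak p_2$ for some $\frak p_1,\frak p_2\in\Ass_R(M)$ of different dimensions.
\end{itemize}
Both sets are finite, which gives closedness.

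If you follow the paper for this step, be careful. It states alternative (ii) as $\frak p\in\min\Supp_R({\cal D}_i)$, which is false in general. Take $K$ a field, $R=K[[x,y,z,w]]$ and $M=R/(x)\oplus R/(y,z)$. Both layers are Cohen--Macaulay and $\nCM(M)=\Var((x,y,z)R)$. Yet $(x,y,z)R$ lies in no $\min\nCM({\cal D}_i)$ and in no $\min\Supp_R({\cal D}_i)$. The finiteness conclusion survives with the corrected set in (ii).
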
  
   
  In general, $\nCM(M)$ is not closed under Zariski topology. We will find in Example \ref{E:2b} a Noetherian local domain $R$ of dimension $3$ such that $\dim (R/\frak a(R))=3$ and $\nCM(R)$ is not closed. Moreover, in Examples \ref{E:1g}, \ref{E:2a}, there is a Noetherian local domain $R$ such that $\dim (R/\frak a(R))=2,$  $\nCM(R)$ is closed but $\dim\nCM(R)<\dim\nCM(\widehat R).$ Therefore, the results of Theorem \ref{T:2a} are optimal in some sense.
  
   In Section 2, we give some preliminaries that will be used later. Section 3 is devoted to the proofs of Theorems \ref{T:1a}, \ref{T:2a}. 

\section{Preliminaries}

\ \ \ \ \ From now on, for a finitely generated $R$-module $M$ of dimension $d$ and each integer $i\leq d$ we  set $\frak a_i(M):=\Ann_RH^i_{\frak m}(M)$. Set
$$\frak a(M):=\frak a_0(M)\frak a_1(M)\cdots \frak a_{d-1}(M).$$
The following splitting result for local cohomology modules (see \cite[Lemma 3.2]{NQ}) is very useful for the proof of the main result of this paper. 

\begin{lemma}  \label{L:1d} Let $U_M(0)$ be the largest submodule of $M$ of dimension strictly less than $d$. Set $\overline M=M/U_M(0)$.  If  $x\in\frak a(M)^3$  is a parameter element of $M$ then for all $i<d-1$  we have 
	$$H^i_{\frak m}(M/xM)\cong H^i_{\frak m}(M)\oplus H^{i+1}_{\frak m}(\overline{M}).$$
\end{lemma}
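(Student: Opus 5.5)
The plan is to reduce everything to the module $\overline M=M/U_M(0)$, for which $x$ is a regular element, and then obtain the splitting from the fact that $x$ is a product of several elements of $\frak a(M)$. I have not checked every step; the step I am least sure of is how $\frak a(M)$ annihilates $U_M(0)$ and the local cohomology of $\overline M$.

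\emph{Step 1: regularity of $x$ on $\overline M$ and the basic exact sequences.} Write $U=U_M(0)$. By maximality of $U$, every $\frak p\in\Ass_R(\overline M)$ satisfies $\dim(R/\frak p)=d$. A parameter element of $M$ avoids all such primes, so $x$ is $\overline M$-regular. Next I would use the known fact (Schenzel, via the dimension filtration) that $\frak a(M)\subseteq\Ann_R(U)$. Granting it, $xU=0$. Regularity of $x$ on $\overline M$ gives $xM\cap U=xU=0$, so we obtain a short exact sequence
$$0\to U\to M/xM\to \overline M/x\overline M\to 0.$$
We also have the sequence $0\to U\to M\to\overline M\to 0$.

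\emph{Step 2: annihilators of the local cohomology of $\overline M$.} From the long exact sequence of $0\to U\to M\to\overline M\to 0$, the module $H^j_{\frak m}(\overline M)$ is an extension of a submodule of $H^{j+1}_{\frak m}(U)$ by a quotient of $H^j_{\frak m}(M)$. I would show $\frak a(M)^2H^j_{\frak m}(\overline M)=0$ for $j<d$. For this one needs a power of $\frak a(M)$ to kill $H^{j+1}_{\frak m}(U)$ for $j+1<d$. I would try to get it from the same sequence: $H^{j}_{\frak m}(\overline M)\to H^{j+1}_{\frak m}(U)\to H^{j+1}_{\frak m}(M)$, combined with the standard bound $\frak a_{0}(M)\cdots\frak a_{d-1}(M)\subseteq \Ann_R H^j_{\frak m}(\overline M)$ known from the theory of the dimension filtration. \textbf{This is the step I expect to be the main obstacle}, and it explains why the hypothesis is $\frak a(M)^3$ rather than $\frak a(M)$.

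\emph{Step 3: splitting of the sequence for $\overline M$.} The sequence $0\to\overline M\xrightarrow{x}\overline M\to\overline M/x\overline M\to0$ gives, for $i<d-1$,
$$0\to H^i_{\frak m}(\overline M)/xH^i_{\frak m}(\overline M)\to H^i_{\frak m}(\overline M/x\overline M)\to 0:_{H^{i+1}_{\frak m}(\overline M)}x\to0.$$
Since $x$ kills $H^i_{\frak m}(\overline M)$ and $H^{i+1}_{\frak m}(\overline M)$, this becomes
$$0\to H^i_{\frak m}(\overline M)\to H^i_{\frak m}(\overline M/x\overline M)\to H^{i+1}_{\frak m}(\overline M)\to0.$$
To split it, write $x=yz$ with $y$ and $z$ annihilating these modules. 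Compare the multiplication-by-$x$ sequence with the multiplication-by-$z$ sequence through the map $\overline M/x\overline M\to\overline M/z\overline M$ induced by the factorization $\overline M\xrightarrow{y}\overline M\xrightarrow{z}\overline M$. The extension class of the $x$-sequence is then $y$ times a class, so it vanishes. This gives a splitting map $H^{i+1}_{\frak m}(\overline M)\to H^i_{\frak m}(\overline M/x\overline M)$.

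\emph{Step 4: passing from $\overline M$ back to $M$.} Apply local cohomology to $0\to U\to M/xM\to\overline M/x\overline M\to0$ and compare, through the natural maps, with the sequence $0\to U\to M\to\overline M\to0$. Here $M\to M/xM$ and $\overline M\to\overline M/x\overline M$ induce the inclusions $H^i_{\frak m}(M)\to H^i_{\frak m}(M/xM)$ and $H^i_{\frak m}(\overline M)\to H^i_{\frak m}(\overline M/x\overline M)$; these are injective because $x$ annihilates $H^i_{\frak m}(M)$. The splitting map from Step 3, lifted along $H^i_{\frak m}(M/xM)\to H^i_{\frak m}(\overline M/x\overline M)$, produces a section; the remaining factor of $\frak a(M)$ in $x\in\frak a(M)^3$ is used to kill the obstruction coming from $H^{i+1}_{\frak m}(U)$. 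A diagram chase (five lemma) then identifies $H^i_{\frak m}(M/xM)$ with $H^i_{\frak m}(M)\oplus H^{i+1}_{\frak m}(\overline M)$ for all $i<d-1$.
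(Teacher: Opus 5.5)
The paper gives no proof of this lemma (it is quoted from \cite[Lemma 3.2]{NQ}), so I am measuring your plan against the standard argument. There is a genuine gap: Step~4 is not yet an argument, and the factor count in Steps~2--3 does not leave the ``remaining factor'' that Step~4 relies on.

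\textbf{Steps 1--3.} Step 1 is sound. The fact $\frak a(M)U=0$ is known: it follows from Schenzel's theorem that $\frak a(M)$ annihilates $0:_My$ for every parameter element $y$ of $M$, applied to a parameter element $y\in\Ann_RU$, for which $0:_My=U$. Given this, Step 2 is not an obstacle at all. $\frak a(M)$ kills $U$, hence every $H^{j+1}_{\frak m}(U)$, so $\frak a(M)^2H^j_{\frak m}(\overline M)=0$ for $j<d$. The ``standard bound'' $\frak a(M)H^j_{\frak m}(\overline M)=0$ you propose to use is stronger than what Step 2 is meant to prove, and it is not justified.

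With the bound you can actually prove, the count of factors fails. Write $\epsilon_x,\epsilon_z$ for the extension classes in Step 3. The relation $\epsilon_x=y\,\epsilon_z$ requires $z$ to kill both $H^i_{\frak m}(\overline M)$ and $H^{i+1}_{\frak m}(\overline M)$, and $y\,\epsilon_z=0$ requires $y$ to kill one of them. So you need $y,z\in\frak a(M)^2$, i.e.\ $x\in\frak a(M)^4$, and nothing is left for Step 4. Besides, an element of $\frak a(M)^3$ is in general a sum of products, not a single product $yz$.

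\textbf{Step 4.} Injectivity of $H^i_{\frak m}(M)\to H^i_{\frak m}(M/xM)$ does not follow from $xH^i_{\frak m}(M)=0$, because $x$ is a zero-divisor on $M$ when $U\neq 0$. The kernel is the image of the map $H^i_{\frak m}(\overline M)\to H^i_{\frak m}(M)$ induced by $\overline M\xrightarrow{x}M$, and you give no reason why it vanishes.

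Moreover, the map $H^i_{\frak m}(M/xM)\to H^i_{\frak m}(\overline M/x\overline M)$ along which you lift your section is in general neither injective (its kernel is the image of $H^i_{\frak m}(U)$) nor surjective (its cokernel is a submodule of $H^{i+1}_{\frak m}(U)$). Lifting a homomorphism along it therefore meets two obstructions: the composite of your section with the connecting map to $H^{i+1}_{\frak m}(U)$, and an $\mathrm{Ext}^1$-class with values in the image of $H^i_{\frak m}(U)$. You do not say how a ``remaining factor'' removes them. Nor is any map $H^i_{\frak m}(M)\oplus H^{i+1}_{\frak m}(\overline M)\to H^i_{\frak m}(M/xM)$ ever constructed for a five-lemma chase to act on.

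\textbf{The missing idea} is to avoid $\overline M/x\overline M$ and $U$ altogether. Since $xU=0$ and $x$ is $\overline M$-regular, $0:_Mx=U$. So multiplication by $x$ gives an exact sequence $0\to\overline M\xrightarrow{x}M\to M/xM\to 0$, whose cohomology sequence involves exactly $H^i_{\frak m}(M)$, $H^i_{\frak m}(M/xM)$ and $H^{i+1}_{\frak m}(\overline M)$.

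Write $x=\sum_ke_kf_ka_k$ with $e_k,f_k,a_k\in\frak a(M)$. As $a_kU=0$, each $a_k$ defines $\nu_k\colon\overline M\to M$, $m+U\mapsto a_km$, and the map $\overline M\xrightarrow{x}M$ equals $\sum_ke_kf_k\nu_k$. Since $f_k$ kills $H^j_{\frak m}(M)$ for $j<d$, this map induces zero on $H^j_{\frak m}$ for $j<d$. This yields the exact sequence $0\to H^i_{\frak m}(M)\to H^i_{\frak m}(M/xM)\to H^{i+1}_{\frak m}(\overline M)\to 0$ for $i<d-1$.

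To split it, work with \v{C}ech complexes.
\begin{itemize}
\item Take a cycle $u$ of degree $i+1$ in the complex of $\overline M$. Choose $w_k$ in degree $i$ of the complex of $M$ with $\partial w_k=f_k\nu_k(u)$; this is possible since $f_k$ kills $H^{i+1}_{\frak m}(M)$.
\item Send $[u]$ to the class of the image of $\sum_ke_kw_k$ in the complex of $M/xM$. This image is a cycle, since $\partial\sum_ke_kw_k$ is the image of $u$ under $\overline M\xrightarrow{x}M$.
\item The class is independent of choices. Changing $w_k$ by a cycle changes the sum by a boundary, because $e_k$ kills $H^i_{\frak m}(M)$. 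Changing $u$ by $\partial t$ changes the sum by the image of $t$ under $\overline M\xrightarrow{x}M$, which vanishes modulo $x$.
\end{itemize}
So this is a well-defined section of the connecting map. This is exactly where the three factors of $\frak a(M)$ are consumed: one to pass from $\overline M$ to $M$, one to kill $H^{i+1}_{\frak m}(M)$, and one to kill $H^i_{\frak m}(M)$.
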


It is clear that if $R$ is a quotient of a Cohen-Macaulay local ring then $R$ is universally catenary and all formal fibers of $R$ are Cohen-Macaulay. The following important result was proved by Kawasaki \cite[Corollary 1.2]{Kaw}} which shows that the converse statement is  true.

\begin{lemma}\label{L:1c} $R$ is a quotient of a Cohen-Macaulay local ring if and only if $R$ is universally catenary and all formal fibers of $R$ are Cohen-Macaulay.
\end{lemma}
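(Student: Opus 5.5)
The easy direction is standard. Suppose $R\cong S/I$ with $S$ a Cohen-Macaulay local ring. Then $S$ is universally catenary, and so is every quotient of it, so $R$ is universally catenary. For the formal fibers, use that the formal fibers of $S$ are Cohen-Macaulay. This is the classical fact that a Cohen-Macaulay local ring has Cohen-Macaulay formal fibers: for $P\in\Spec\widehat S$ lying over $\frak q$, the flat local map $S_{\frak q}\to\widehat S_P$ has Cohen-Macaulay target, so its closed fibre is Cohen-Macaulay. Formal fibers of $R=S/I$ are formal fibers of $S$ at primes containing $I$, because $\widehat R\otimes_R k(\frak p)\cong \widehat S\otimes_S k(\frak p)$. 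So all formal fibers of $R$ are Cohen-Macaulay.

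For the converse I would follow Kawasaki's strategy through \emph{p-standard systems of parameters} and an explicit Rees-algebra construction. The plan has four steps.

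\textbf{Step 1 (reduction to equidimensional pieces).} Use induction on $\dim R$ together with the dimension filtration. The aim is to reduce to showing that each relevant quotient $R/\frak p$, and more generally each unmixed component, admits a uniform local cohomology annihilator of large height.

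\textbf{Step 2 (the annihilator bound).} Show that universal catenarity plus Cohen-Macaulay formal fibers forces $\dim R/\frak a(R/\frak p)<\dim R/\frak p$ for every $\frak p$, and in fact forces $\dim R/\frak a(N)$ to be as small as possible on every stratum. The tools are:
\begin{itemize}
\item the formula $\dim R/\frak a_i\le i$ type bounds, obtained by passing to $\widehat R$, where local duality over a Gorenstein ring is available;
\item the identification of $\Ann H^i_{\frak m}$ with annihilators of $\operatorname{Ext}$ modules over $\widehat R$;
\item Cohen-Macaulay formal fibers, which transfer Cohen-Macaulayness of localizations of $\widehat M$ back to $M_{\frak p}$;
\item universal catenarity, which makes $\widehat{R/\frak p}$ equidimensional (Ratliff), giving unmixedness.
\end{itemize}
This is essentially the converse direction of Theorem \ref{T:1a}(b) applied at every level.

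\textbf{Step 3 (p-standard system of parameters).} From these bounds, use prime avoidance to choose a system of parameters $x_1,\dots,x_d$ of $R$ with $x_d\in\frak a(R)^3$, $x_{d-1}\in\frak a(R/x_dR)^3$, and so on. Lemma \ref{L:1d} controls how local cohomology of $R/(x_i,\dots,x_d)$ splits, and this makes the choice recursive and shows the resulting system is p-standard.

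\textbf{Step 4 (construction of the Cohen-Macaulay ring).} Given a p-standard system $\underline x$, take the ideal $\frak q$ generated by suitable products of the $x_i$ (Kawasaki's $\frak q=\prod_i(x_1,\dots,x_i)$). Form the extended Rees algebra $T=R[\frak q t,t^{-1}]$ localized at its graded maximal ideal. The aim is to show $T$ is Cohen-Macaulay. This is done by computing its local cohomology through the splitting of Lemma \ref{L:1d}: the p-standard property makes the lower local cohomologies of the Rees algebra vanish, so this is arithmetic Macaulayfication. Then $R\cong T/t^{-1}T$ exhibits $R$ as a quotient of a Cohen-Macaulay local ring.

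The main obstacle is Step 4, the vanishing of the local cohomology of the Rees algebra. It requires delicate bookkeeping of the graded pieces $H^i(\frak q^n)$, using that the $x_i$ kill the relevant local cohomology uniformly. I would first handle the case where $R$ is unmixed with $\dim R/\frak a(R)\le 0$ (generalized Cohen-Macaulay). There the Goto–Yamagishi theory gives the Rees computation directly, and I would then induct on $\dim R/\frak a(R)$.
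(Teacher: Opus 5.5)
Your easy direction is correct. The paper does not prove this lemma at all; it quotes it from Kawasaki [Corollary 1.2]. So the converse is exactly what you would have to supply, and there the proposal is a roadmap with a concrete error in Step 4 and a missing step in Step 1.

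\textbf{Step 4 uses the wrong algebra.} You take the \emph{extended} Rees algebra $R[\frak q t,t^{-1}]$, but $R[\frak q t,t^{-1}]/t^{-1}R[\frak q t,t^{-1}]\cong \operatorname{gr}_{\frak q}(R)$, not $R$. One does have $R\cong R[\frak q t,t^{-1}]/(t^{-1}-1)$, but $t^{-1}-1$ becomes a unit after localizing at $\frak N=(\frak m,\frak q t,t^{-1})$. Worse, your goal cannot be reached unless $R$ is already Cohen--Macaulay:
\begin{itemize}
\item the homogeneous prime $\frak P=\frak mR[t,t^{-1}]\cap R[\frak q t,t^{-1}]$ is contained in $\frak N$ and does not contain $t^{-1}$;
\item hence the localization $R[\frak qt,t^{-1}]_{\frak P}$ of $T$ is isomorphic to $R[t,t^{-1}]_{\frak mR[t,t^{-1}]}$;
\item this is a flat local extension of $R$ with closed fibre the field $k(t)$, so by Lemma~\ref{L:bs} it is Cohen--Macaulay only if $R$ is.
\end{itemize}
The object that actually works is the ordinary Rees algebra $R[\frak q t]$, localized at $\frak m+\frak qtR[\frak qt]$, using $R\cong R[\frak qt]/\frak qtR[\frak qt]$. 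Proving that this Rees algebra is Cohen--Macaulay for a suitable $\frak q$ built from a p-standard system is the whole content of Kawasaki's theorem. Your text postpones this (``delicate bookkeeping'', ``induct on $\dim R/\frak a(R)$'') rather than proving it.

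\textbf{Step 1 does not reduce to the unmixed case, and the Rees-algebra method needs $R$ unmixed.} Two obstructions show this.
\begin{itemize}
\item If $\frak q$ contains a power of a parameter element, the component of $R[\frak qt]$ over a minimal prime $\frak p$ has dimension at most $\dim R/\frak p+1$, with equality $d+1$ when $\dim R/\frak p=d$. So a non-equidimensional $R$ gives a non-equidimensional Rees algebra.
\item A nonzero socle element of $H^0_{\frak m}(R)$ is killed by the whole graded maximal ideal, forcing depth $0$.
\end{itemize}
Universal catenarity and Cohen--Macaulay formal fibres do not make $R$ unmixed; think of the complete rings $k[[x,y,z]]/(xy,xz)$ or $k[[x,y]]/(x^2,xy)$. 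They only make each $R/\frak p$ unmixed: Ratliff gives equidimensionality of $\widehat{R/\frak p}$, and the Cohen--Macaulay fibre at $\frak p$ excludes embedded primes of $\widehat R/\frak p\widehat R$. Good behaviour of the pieces $R/\frak p$ or ${\cal D}_i$ does not by itself produce a Cohen--Macaulay ring mapping onto $R$. You need an explicit device. For example:
\begin{itemize}
\item take a primary decomposition $0=\bigcap_i Q_i$, put $k_i=d-\dim R/\sqrt{Q_i}$ and $B=R[X_1,\dots,X_d]_{(\frak m,X)}$;
\item set $C=B/\bigcap_i\big(Q_iB+(X_{k_i+1},\dots,X_d)B\big)$;
\item then $C/(X_1,\dots,X_d)C\cong R$ and $C$ is unmixed of dimension $d$;
\item you must still check that $C$ has Cohen--Macaulay formal fibres, which needs a stability result of Avramov--Foxby type.
\end{itemize}

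\textbf{Steps 2--3 are sound.} Every $R/\frak p$ is unmixed with Cohen--Macaulay formal fibre, so the ``if'' direction of Theorem~\ref{T:1}(a) gives $\dim R/\frak a(N)<\dim N$ for every finitely generated $N$. (Part (b) has no converse.) Prime avoidance applied successively to $N=R/(x_{i+1},\dots,x_d)$ then yields a p-standard system; Lemma~\ref{L:1d} is not needed for this.
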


The set of attached primes for Artinian modules introduced  by  Macdonald \cite{Mac}  makes an important role similar to the set of associated primes for Noetherian modules.  Let $A$ be an Artinian $R$-module. Then $A$ has a minimal secondary representation $A=A_1+\ldots +A_n,$ where $A_i$ is $\frak p_i$-secondary. The set $\{\frak p_1,\ldots ,\frak p_n\}$ is independent of the choice of the minimal  secondary representation of $A$. This set is called {\it the set of attached primes} of $A$ and is denoted by  $\Att_RA$.     Here are some basic properties of attached primes of Artinian modules, see \cite{Mac}, \cite[8.2.4, 8.2.5]{BS}. For an ideal $I$ of $R$, denote by $\Var (I)$ the set of all prime ideals of $R$ containing $I$.

\begin{lemma}  \label{L:1a} Let $A$ be an Artinian $R$-module. Then
\begin{itemize}
\item[\rm{(a)}] $\min\Att_RA=\min\Var (\Ann_RA).$ In particular,  $A\neq 0$ if and only if $\Att_RA\neq \emptyset$.
\item[\rm{(b)}]  $A$  has a natural structure as an Artinian $\widehat R$-module and $$\Att_RA=\{\frak P\cap R\mid \frak P\in\Att_{\widehat R}A\}.$$
\end{itemize}
\end{lemma}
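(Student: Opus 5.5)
For part (a) I will work directly from a minimal secondary representation $A=A_1+\cdots+A_n$ with $A_i$ being $\frak p_i$-secondary. The first step is to record that $\sqrt{\Ann_R A_i}=\frak p_i$. Indeed, by definition of $\frak p_i$-secondary, an element $r\in\frak p_i$ acts nilpotently on $A_i$, while $r\notin\frak p_i$ acts surjectively and hence cannot be nilpotent, since $A_i\neq 0$. Since $\Ann_R A=\bigcap_i\Ann_R A_i$, we get $\sqrt{\Ann_R A}=\frak p_1\cap\cdots\cap\frak p_n$. The minimal primes of $\Ann_R A$ are therefore exactly the minimal elements of $\{\frak p_1,\ldots,\frak p_n\}=\Att_R A$, by prime avoidance for a finite intersection of primes. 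For the \emph{in particular} statement, $A=0$ corresponds to the empty representation ($n=0$). Conversely, $A\neq 0$ forces $\Ann_R A\neq R$, so $\Var(\Ann_R A)$, and hence $\min\Att_R A$, is nonempty.

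For part (b), I first need the $\widehat R$-structure. Each $x\in A$ generates a finitely generated Artinian submodule $Rx$, which has finite length. Hence $\frak m^k x=0$ for some $k$, so $A$ is $\frak m$-torsion. For $a\in\widehat R$, I choose $r\in R$ with $a-r\in\frak m^k\widehat R$ and set $ax:=rx$. This is well defined and makes $A$ an $\widehat R$-module. Moreover, every $R$-submodule of $A$ is automatically an $\widehat R$-submodule, so the lattices of $R$- and $\widehat R$-submodules coincide. In particular $A$ is Artinian over $\widehat R$, and an $\widehat R$-module $B\subseteq A$ is a quotient of $A$-type object over $R$ exactly when it is over $\widehat R$.

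Next I compare secondary components over the two rings. Let $A=B_1+\cdots+B_m$ be a minimal secondary representation over $\widehat R$, with $B_j$ being $\frak P_j$-secondary. For $r\in R$, multiplication by $r$ on $B_j$ equals multiplication by the image of $r$ in $\widehat R$. So it is nilpotent if $r\in\frak P_j\cap R$ and surjective otherwise. Thus $B_j$ is $(\frak P_j\cap R)$-secondary as an $R$-module.

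The step I expect to need the most care is minimality, because distinct $\frak P_j$ may contract to the same prime of $R$. To handle this, I group together the $B_j$ having the same contraction $\frak p$. A finite sum of $\frak p$-secondary modules is $\frak p$-secondary, so this yields an $R$-secondary representation with pairwise distinct primes $\{\frak P_j\cap R\}$.

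Suppose one group were redundant over $R$. Since sums of submodules are the same over $R$ and $\widehat R$, deleting all the $B_j$ in that group would leave an $\widehat R$-representation of $A$. This contradicts the minimality of $B_1+\cdots+B_m$. Hence the grouped representation is minimal over $R$. By the uniqueness of attached primes, $\Att_R A=\{\frak P\cap R\mid\frak P\in\Att_{\widehat R}A\}$.
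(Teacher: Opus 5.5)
Your proof is correct and is the standard argument: (a) follows from $\sqrt{\Ann_R A}=\frak p_1\cap\cdots\cap\frak p_n$, and (b) follows by viewing a minimal $\widehat R$-secondary representation as an $R$-secondary one, merging components with the same contraction, and ruling out redundancy because a sum of submodules does not depend on the base ring. The paper gives no proof of this lemma; it only cites Macdonald and Brodmann--Sharp, where essentially this argument appears.
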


Note that   $H^i_{\frak m}(M)$ is an Artinian $R$-module for all  $i\geq 0$. Here are some properties of attached primes of $H^i_{\frak m}(M)$, see \cite[11.3.9]{BS},  \cite[Corollary 4.2]{CNN}.  

\begin{lemma}   \label{L:1b} The following statements are true.
\begin{itemize}
\item[\rm{(a)}] If $\frak p\in\Ass_R(M)$ and $\dim (R/\frak p)=i$, then $\frak p\in\Att_RH^i_{\frak m}(M).$
\item[\rm{(b)}]  Suppose that $R$ is a quotient of a Cohen-Macaulay ring and $\frak p\in\Att_RH^i_{\frak m}(M)$. Then $\dim (R/\frak p)\leq i$. If $\dim (R/\frak p)=i$ then $\frak p\in\Ass_R(M)$.
\end{itemize}
\end{lemma}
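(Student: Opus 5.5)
For part (a) we pass to the completion and use local duality there. Let $\frak p\in\Ass_R(M)$ with $\dim(R/\frak p)=i$. Since $R/\frak p\hookrightarrow M$ and $\widehat R$ is flat, we get $\widehat R/\frak p\widehat R\hookrightarrow \widehat M$. Choose $\frak P\in\Ass_{\widehat R}(\widehat R/\frak p\widehat R)$ with $\dim(\widehat R/\frak P)=\dim(\widehat R/\frak p\widehat R)=i$. Then $\frak P\in\Ass_{\widehat R}(\widehat M)$ and $\frak P\cap R=\frak p$ by flatness.

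By Lemma \ref{L:1a}(b), and since $H^i_{\frak m}(M)\cong H^i_{\frak m\widehat R}(\widehat M)$, it suffices to show $\frak P\in\Att_{\widehat R}H^i_{\frak m\widehat R}(\widehat M)$. Write $\widehat R=S/I$ with $S$ a complete regular local ring of dimension $n$. Local duality identifies $\Att_{\widehat R}H^i_{\frak m\widehat R}(\widehat M)$ with $\Ass_S\operatorname{Ext}^{n-i}_S(\widehat M,S)$, viewed in $\Spec(\widehat R)$.

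\begin{itemize}
\item Every prime in $\Supp\operatorname{Ext}^{n-i}_S(\widehat M,S)$ has height at least $n-i$ in $S$, so this module has dimension at most $i$.
\item Localizing at $\frak P$ (a prime of height $n-i$ in $S$) and applying local duality over $S_{\frak P}$ gives
$\operatorname{Ext}^{n-i}_S(\widehat M,S)_{\frak P}$ Matlis dual to $H^0_{\frak PS_{\frak P}}(\widehat M_{\frak P})$.
\item The latter is nonzero because $\frak P\in\Ass(\widehat M)$.
\end{itemize}

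Hence $\frak P$ is minimal in the support of this Ext module, so it is associated to it. Therefore $\frak P$ is attached to $H^i_{\frak m\widehat R}(\widehat M)$, and (a) follows.

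For part (b), let $R$ be a quotient of a Cohen--Macaulay local ring. By the easy direction of Lemma \ref{L:1c}, $R$ is universally catenary and all formal fibers of $R$ are Cohen--Macaulay. Let $\frak p\in\Att_RH^i_{\frak m}(M)$. By Lemma \ref{L:1a}(b) pick $\frak P\in\Att_{\widehat R}H^i_{\frak m\widehat R}(\widehat M)$ with $\frak P\cap R=\frak p$. Over the complete ring $\widehat R$ the shifted localization principle holds; it follows from the duality description above by localizing the Ext module at $\frak P$. It gives
$$\frak P\widehat R_{\frak P}\in\Att H^{\,i-\dim(\widehat R/\frak P)}_{\frak P\widehat R_{\frak P}}(\widehat M_{\frak P}).$$
Put $h=\dim\big(\widehat R_{\frak P}/\frak p\widehat R_{\frak P}\big)$, the dimension of the formal fiber ring at $\frak P$. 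Since $R/\frak p$ is formally catenary (Ratliff), $\widehat R/\frak p\widehat R$ is equidimensional and catenary, so
$$\dim(\widehat R/\frak P)+h=\dim(R/\frak p).$$

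Now consider the flat local map $R_{\frak p}\to\widehat R_{\frak P}$. Its closed fiber $F$ is Cohen--Macaulay of dimension $h$, and $\widehat M_{\frak P}=M_{\frak p}\otimes_{R_{\frak p}}\widehat R_{\frak P}$. The key claim is that for such a map the local cohomology of $\widehat M_{\frak P}$ in degree $j$ is built from $H^{j-h}_{\frak pR_{\frak p}}(M_{\frak p})$ tensored with the top local cohomology of $F$. I expect to prove this by taking a system of parameters of $F$, which is a regular sequence, and iterating the long exact sequences. Consequently, if the maximal ideal is attached to the degree-$j$ module then $H^{j-h}_{\frak pR_{\frak p}}(M_{\frak p})\neq 0$, so $j\geq h$. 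Moreover, when $j=h$ one gets $\frak pR_{\frak p}\in\Att H^0_{\frak pR_{\frak p}}(M_{\frak p})$.

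Apply this with $j=i-\dim(\widehat R/\frak P)$. Then $j\ge h$ gives $i\ge\dim(\widehat R/\frak P)+h=\dim(R/\frak p)$. If $\dim(R/\frak p)=i$, then $j=h$, so $\frak pR_{\frak p}\in\Att H^0_{\frak pR_{\frak p}}(M_{\frak p})$. Since $H^0$ has finite length, this means $H^0_{\frak pR_{\frak p}}(M_{\frak p})\neq0$, i.e. $\frak p\in\Ass_R(M)$.

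The main obstacle is the flat base change step: controlling attached primes, not just vanishing, of local cohomology along $R_{\frak p}\to\widehat R_{\frak P}$ with Cohen--Macaulay fiber. If that proves awkward, a fallback is to first reduce to $M$ replaced by $M_{\frak p}$-type data and prove only the two inequalities needed. These are the nonvanishing $H^{j-h}_{\frak pR_{\frak p}}(M_{\frak p})\ne0$ and its $j=h$ refinement, which can be done by induction on $h$ using a fiber parameter that is regular on $\widehat M_{\frak P}$.
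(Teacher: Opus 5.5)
The paper does not prove this lemma; it quotes \cite[11.3.9]{BS} for (a) and \cite[Corollary 4.2]{CNN} for (b). Your write-up is therefore a self-contained substitute, and it is correct.

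Your part (a) is the standard completion-plus-local-duality argument. You choose $\frak P$ minimal over $\frak p\widehat R$ with $\dim(\widehat R/\frak P)=i$. Two facts then make $\frak P$ minimal in the support, hence associated, hence attached: every prime in $\Supp_S\operatorname{Ext}^{n-i}_S(\widehat M,S)$ has height at least $n-i$, and the localization of this Ext module at $\frak P$ does not vanish.

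For (b), the cited route stays over $R$. For a quotient of a Cohen--Macaulay ring, a shifted-localization (pseudo-support) statement gives $H^{i-\dim(R/\frak p)}_{\frak pR_{\frak p}}(M_{\frak p})\neq 0$ whenever $\frak p\in\Att_RH^i_{\frak m}(M)$, and both assertions drop out. You instead use shifted localization only over $\widehat R$, where it is plain local duality. You then descend along $R_{\frak p}\to\widehat R_{\frak P}$ using exactly the two properties supplied by the easy half of Lemma \ref{L:1c}:
\begin{itemize}
\item[(i)] equidimensionality of $\widehat R/\frak p\widehat R$, which gives $\dim(\widehat R/\frak P)+h=\dim(R/\frak p)$;
\item[(ii)] Cohen--Macaulayness of the fiber.
\end{itemize}
This is the same mechanism the paper uses in the converse half of Theorem \ref{T:1}(a). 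What it buys you is a pointwise statement: (b) holds for a given $\frak p$ as soon as $\widehat R/\frak p\widehat R$ is equidimensional and the formal fiber of $R$ at $\frak p$ is Cohen--Macaulay.

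The step you flag as the main obstacle is easier than you expect. You only need non-vanishing, and Lemma \ref{L:bs} gives it at once. Since $\frak P\widehat R_{\frak P}$ is attached, $H^j_{\frak P\widehat R_{\frak P}}(\widehat M_{\frak P})\neq 0$, which forces $j\geq \depth \widehat M_{\frak P}=\depth M_{\frak p}+\depth F=\depth M_{\frak p}+h$. Hence $j\geq h$. If $j=h$, then $\depth M_{\frak p}=0$, i.e.\ $\frak p\in\Ass_R(M)$; here $M_{\frak p}\neq 0$ because $\frak p\supseteq \Ann_R H^i_{\frak m}(M)\supseteq\Ann_RM$. So you need neither a tensor-product description of $H^j(\widehat M_{\frak P})$ in terms of $H^{j-h}(M_{\frak p})$ nor any control of attached primes under base change. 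Your fallback induction on $h$ is in effect a reproof of this depth formula.
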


We recall the formulas of depth and dimension under flat local homomorphism, see \cite[Theorem A11, Proposition 1.2.16]{BH}.

\begin{lemma} \label{L:bs} Let $R\to S$  be a flat local homomorphism between Noetherian local rings. Then
	\begin{align} \dim_S(M\otimes_RS)&=\dim_R(M)+\dim (S/\frak mS);\notag\\
		\depth_S(M\otimes_RS)&=\depth_R(M)+\depth (S/\frak mS).\notag
		\end{align}
		\end{lemma}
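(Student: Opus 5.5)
We establish the two formulas separately. Throughout, $\frak n$ denotes the maximal ideal of $S$ and $N:=M\otimes_RS$. We use repeatedly that $-\otimes_RS$ is exact, so injections and $M$-regular elements are preserved. Since $R\to S$ is local, $\frak mS\subseteq \frak n$. Hence $N\neq 0$ whenever $M\neq 0$, because $N/\frak mN\cong (M/\frak mM)\otimes_k (S/\frak mS)\neq 0$.

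\emph{Dimension, upper bound.} Let $n=\dim_R(M)$ and let $x_1,\ldots,x_n$ be a system of parameters of $M$. Then $N/(x_1,\ldots,x_n)N\cong (M/(x_1,\ldots,x_n)M)\otimes_RS$. The module $M/(x_1,\ldots,x_n)M$ has a finite filtration with factors $k$. Tensoring this filtration with the flat $S$ gives a filtration with factors $S/\frak mS$, so $\dim N/(x_1,\ldots,x_n)N=\dim(S/\frak mS)$. Cutting by $n$ elements lowers dimension by at most $n$, so $\dim_S N\le n+\dim(S/\frak mS)$.

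\emph{Dimension, lower bound.} Pick $\frak p\in\Ass_R(M)$ with $\dim(R/\frak p)=n$. Since $R/\frak p\hookrightarrow M$, flatness gives $S/\frak pS\hookrightarrow N$. It therefore suffices to show $\dim S/\frak pS\ge \dim R/\frak p+\dim S/\frak mS$. We argue by induction on $\dim R/\frak p$; the case $0$ is trivial. Otherwise choose $x\in\frak m\setminus\frak p$. Then $x$ is $R/\frak p$-regular, hence $S/\frak pS$-regular by flatness, which gives
\begin{equation*}
\dim S/\frak pS\ge \dim S/(\frak p,x)S+1 .
\end{equation*}
The ring $R/(\frak p,x)$ has dimension $\dim R/\frak p-1$. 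Applying the case already handled to a minimal prime $\frak q$ of $(\frak p,x)$ of maximal dimension, via $S/\frak qS$, which is a quotient of $S/(\frak p,x)S$, the induction closes.

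\emph{Depth.} We induct on $\depth_R M+\depth(S/\frak mS)$. If $\depth_R M>0$, take an $M$-regular $x\in\frak m$. It is $N$-regular, and $N/xN=(M/xM)\otimes_RS$, so we may pass to $M/xM$. If $\depth(S/\frak mS)>0$, take $y\in\frak n$ regular on $S/\frak mS$. The local flatness criterion (the key technical step) then shows two things: $y$ is $S$-regular, and $S/yS$ is $R$-flat, with $y$ regular on every $M\otimes_RS$. We pass to $R\to S/yS$, which is flat and local with closed fibre $S/(\frak m,y)S$. It remains to treat the base case, where both depths are $0$. There $\frak m\in\Ass_R M$, so $k\hookrightarrow M$ and hence $S/\frak mS\hookrightarrow N$. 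Since $\frak n\in\Ass_S(S/\frak mS)$, we get $\frak n\in\Ass_S N$, so $\depth_S N=0$.

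The main obstacle is the regular-element-in-the-fibre step. It requires the local flatness criterion: if $y$ is a nonzerodivisor on $S/\frak mS$ and $S$ is $R$-flat, then $y$ is a nonzerodivisor on $M\otimes_RS$ for every finitely generated $M$. I would prove this by induction on the length of $M$ for finite-length $M$, and then pass to general $M$ through the Artin--Rees lemma and Krull's intersection theorem.
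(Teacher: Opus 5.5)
Your proof is correct. The paper gives no argument here; it simply cites Bruns--Herzog [Theorem A.11, Proposition 1.2.16]. Your depth argument is essentially the textbook proof of Proposition 1.2.16: remove an $M$-regular element, or an element regular on the closed fibre $S/\frak mS$, and in the depth-zero case use $k\hookrightarrow M$ to get $S/\frak mS\hookrightarrow M\otimes_RS$.

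You assert without argument that $S/yS$ is $R$-flat, but this follows at once from what you do prove. Since $\mathrm{Tor}^R_1(M,S)=0$, there is an exact sequence $0\to\mathrm{Tor}^R_1(M,S/yS)\to M\otimes_RS\xrightarrow{\,y\,}M\otimes_RS$. Hence $\mathrm{Tor}^R_1(M,S/yS)=0$ for every finitely generated $M$.

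For the dimension formula your route differs from the standard one. The usual lower bound (Matsumura, Theorem 15.1) uses going-down for flat maps to get $\operatorname{ht}\frak P=\operatorname{ht}\frak p+\dim S_{\frak P}/\frak pS_{\frak P}$ for primes $\frak P$ lying over $\frak p$. You instead embed $S/\frak pS$ into $M\otimes_RS$ and induct on $\dim R/\frak p$, using only that an element of $\frak m\setminus\frak p$ stays regular on $S/\frak pS$ after flat base change. Your version is more elementary and self-contained. Going-down gives the height formula at every prime of $S$ at once. However, the paper only applies the lemma to flat local maps such as $R_{\frak p}\to\widehat R_{\frak P}$, so your global statement is all that is needed.
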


As mentioned in the introduction, $\nCM (M)$ is not a closed subset of  $\Spec(R)$ under Zariski topology in general (cf. Example \ref{E:2b}). Note that $\nCM (M)$ is always closed under specialization, i.e. for any prime ideals $\frak q\subseteq \frak p$ of $R$, if $\frak q\in\nCM(M)$ then $\frak p\in\nCM (M)$. So, we can define its dimension as follows
$$\dim_R(\nCM(M))=\max\{\dim(R/\frak p)\mid \frak p\in\nCM (M)\}.$$

\begin{lemma} \label{L:1e} The following statements are true. 
\begin{itemize}
\item[\rm{(a)}] $\nCM(M)\subseteq \{\frak P\cap R\mid \frak P\in\nCM(\widehat M)\}.$ In particular, $\dim_R\nCM(M)\leq \dim_{\widehat R}\nCM(\widehat M).$
\item[{\rm(b)}]  $\nCM(M)$ is closed under Zariski topology if and only if it has only finitely many minimal elements (under the inclusion).
\item[{\rm(c)}] $\nCM(\widehat{M})$ is  closed under Zariski topology and $\min\nCM(\widehat{M})$ is a finite set.
\end{itemize}
\end{lemma}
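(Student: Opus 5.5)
For part (a), I will pass from $R_{\frak p}$ to a localization of $\widehat R$ at a suitable prime lying over $\frak p$, and use Lemma \ref{L:bs} to compare depth and dimension along that flat local map. Parts (b) and (c) are essentially formal consequences of Noetherianity together with the known closedness results over complete rings.

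\emph{Step 1: choosing the prime over $\frak p$.} Let $\frak p\in\nCM(M)$, so in particular $M_{\frak p}\neq 0$. Since $\widehat R/\frak p\widehat R\cong\widehat{R/\frak p}$ has dimension $\dim(R/\frak p)$, I choose $\frak P\in\min\Var(\frak p\widehat R)$ with $\dim(\widehat R/\frak P)=\dim(R/\frak p)$. By faithful flatness of $R\to\widehat R$ we have $\frak P\cap R=\frak p$, so $R_{\frak p}\to\widehat R_{\frak P}$ is a flat local homomorphism. Its closed fiber is $\widehat R_{\frak P}/\frak p\widehat R_{\frak P}$, which is Artinian because $\frak P$ is minimal over $\frak p\widehat R$. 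Hence this closed fiber has both dimension and depth equal to $0$.

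\emph{Step 2: comparing $M_{\frak p}$ and $\widehat M_{\frak P}$.} We have $\widehat M_{\frak P}\cong M_{\frak p}\otimes_{R_{\frak p}}\widehat R_{\frak P}$. Lemma \ref{L:bs} then gives
$$\dim \widehat M_{\frak P}=\dim M_{\frak p}\quad\text{and}\quad \depth\widehat M_{\frak P}=\depth M_{\frak p}.$$
Since $M_{\frak p}$ is not Cohen-Macaulay, neither is $\widehat M_{\frak P}$, so $\frak P\in\nCM(\widehat M)$ with $\frak P\cap R=\frak p$. Moreover $\dim(R/\frak p)=\dim(\widehat R/\frak P)\le\dim_{\widehat R}\nCM(\widehat M)$, which proves the inequality of dimensions. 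The only point needing care is arranging $\dim(\widehat R/\frak P)=\dim(R/\frak p)$ while keeping $\frak P$ minimal over $\frak p\widehat R$. This is handled by taking $\frak P$ among the minimal primes of $\frak p\widehat R$ of maximal dimension.

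\emph{Step 3: part (b).} Suppose $\nCM(M)$ has finitely many minimal elements $\frak p_1,\ldots,\frak p_n$. Every element of $\nCM(M)$ contains some minimal element, since $R$ is Noetherian and $\nCM(M)$ is closed under specialization. Closure under specialization also gives the reverse inclusion, so
$$\nCM(M)=\Var(\frak p_1)\cup\cdots\cup\Var(\frak p_n)=\Var(\frak p_1\cdots\frak p_n),$$
which is closed. Conversely, if $\nCM(M)=\Var(I)$ is closed, then its minimal elements are the minimal primes of $I$, and there are only finitely many of these.

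\emph{Step 4: part (c).} By Cohen's structure theorem, $\widehat R$ is a quotient of a regular local ring, hence of a Gorenstein local ring (and it is excellent). So $\nCM(\widehat M)$ is closed by the results cited in the introduction (\cite{G1} or \cite{Sh1,Sc1,C}). Finiteness of $\min\nCM(\widehat M)$ then follows from (b) applied over $\widehat R$.
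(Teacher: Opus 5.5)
Your proposal is correct and follows essentially the same route as the paper. For (a) the paper also takes $\frak P\in\Ass(\widehat R/\frak p\widehat R)$ with $\dim(\widehat R/\frak P)=\dim(R/\frak p)$, which is the same as your minimal prime of $\frak p\widehat R$ of maximal dimension; it then notes that the fiber $\widehat R_{\frak P}/\frak p\widehat R_{\frak P}$ is zero-dimensional Cohen--Macaulay and applies Lemma \ref{L:bs}. Parts (b) and (c) are obtained from closure under specialization and then (b), as you do; your appeal to Cohen's structure theorem for the closedness of $\nCM(\widehat M)$ supplies a step the paper leaves implicit.
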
 
\begin{proof} (a) Let $\frak p\in\nCM(M).$ We choose $\frak P\in\Ass(\widehat R/\frak p\widehat R)$ such that $\dim (\widehat R/\frak P)=\dim (R/\frak p).$  Then $\frak P\cap R=\frak p$ and $\widehat R_{\frak P}/\frak p\widehat R_{\frak P}$ is Cohen-Macaulay of dimension $0$.  Since the natural map $R_{\frak p}\rightarrow \widehat R_{\frak P}$ is faithfully flat and $\widehat M_{\frak P}\cong M_{\frak p}\otimes_{R_{\frak p}}\widehat R_{\frak P},$ it follows by Lemma \ref{L:bs} that $\frak P\in\nCM (\widehat M).$ Thus, $\nCM(M)\subseteq \{\frak P\cap R\mid \frak P\in\nCM(\widehat M)\}$ and the rest statement follows.

The statement (b) follows by the fact that $\nCM (M)$ is closed under specialization. The statement (c) is derived by the assertion (b). 
\end{proof}

The following lemma gives a criterion for the closedness of the non-Cohen-Macaulay locus in term of the Cohen-Macaulayness of finitely many formal fibers of $R$.

\begin{lemma} \label{P:1e}  The following statements are equivalent:

\begin{itemize}
\item[\rm{(a)}] $\nCM(M)=\{\frak P\cap R\mid \frak P\in\nCM(\widehat M)\}$.
\item[\rm{(b)}]  $\widehat R_{\frak P}/\frak p\widehat R_{\frak P}$ is Cohen-Macaulay for all  $\frak P\in\min\nCM(\widehat M)$ and $\frak p=\frak P\cap R.$ 
\end{itemize}
 If the conditions (a), (b) satisfy then $\dim_R\nCM(M)=\dim_{\widehat R}\nCM(\widehat M)$ and   $$\nCM(M)=\bigcup_{\frak P\in \min\nCM(\widehat M)}\Var(\frak P\cap R)$$ which is  closed under Zariski topology. 
 \end{lemma}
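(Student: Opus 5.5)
The key step is a fiber-by-fiber depth/dimension comparison via Lemma \ref{L:bs}; closedness and the dimension equality then follow from finiteness of $\min\nCM(\widehat M)$ (Lemma \ref{L:1e}(c)).

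\textbf{Setup.} For $\frak P\in\Spec(\widehat R)$ with $\frak p=\frak P\cap R$, the map $R_{\frak p}\to\widehat R_{\frak P}$ is flat local with closed fiber $F_{\frak P}:=\widehat R_{\frak P}/\frak p\widehat R_{\frak P}$. Moreover $\widehat M_{\frak P}\cong M_{\frak p}\otimes_{R_{\frak p}}\widehat R_{\frak P}$. Lemma \ref{L:bs} gives
$$\dim \widehat M_{\frak P}-\depth \widehat M_{\frak P}=(\dim M_{\frak p}-\depth M_{\frak p})+(\dim F_{\frak P}-\depth F_{\frak P}),$$
provided $M_{\frak p}\neq 0$. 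Hence if $\frak P\in\Supp(\widehat M)$, then $\frak P\in\nCM(\widehat M)$ if and only if $\frak p\in\nCM(M)$ or $F_{\frak P}$ is not Cohen-Macaulay. Note that $\nCM(\widehat M)\subseteq\Supp(\widehat M)$, and $\frak P\in\Supp(\widehat M)$ forces $\frak p\in\Supp(M)$.

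\textbf{(a)$\Rightarrow$(b).} Let $\frak P\in\min\nCM(\widehat M)$ and $\frak p=\frak P\cap R$. By (a), $\frak p\in\nCM(M)$. As in the proof of Lemma \ref{L:1e}(a), choose $\frak Q\in\Ass(\widehat R/\frak p\widehat R)$ with $\frak Q\subseteq\frak P$. Then $\frak Q\cap R=\frak p$ and $F_{\frak Q}$ has dimension $0$, so it is Cohen-Macaulay. The formula then gives $\frak Q\in\nCM(\widehat M)$, and minimality of $\frak P$ forces $\frak Q=\frak P$. Therefore $F_{\frak P}$ is Artinian, hence Cohen-Macaulay. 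The only delicate point is choosing $\frak Q$ inside $\frak P$. This is possible because $\frak p\widehat R\subseteq\frak P$, so $\frak P$ contains some minimal prime $\frak Q$ of $\frak p\widehat R$. Minimal primes are associated, and $\frak Q\cap R=\frak p$ since $\frak p\subseteq\frak Q\cap R\subseteq\frak P\cap R=\frak p$.

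\textbf{(b)$\Rightarrow$(a).} The inclusion $\subseteq$ is Lemma \ref{L:1e}(a). For the reverse inclusion, let $\frak P\in\nCM(\widehat M)$, pick $\frak P'\in\min\nCM(\widehat M)$ with $\frak P'\subseteq\frak P$, and set $\frak p'=\frak P'\cap R\subseteq\frak p=\frak P\cap R$. By (b), $F_{\frak P'}$ is Cohen-Macaulay, so the formula forces $\frak p'\in\nCM(M)$. Since $\nCM(M)$ is closed under specialization, $\frak p\in\nCM(M)$.

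\textbf{Consequences.} The same argument shows
$$\nCM(M)=\bigcup_{\frak P\in\min\nCM(\widehat M)}\Var(\frak P\cap R).$$
Indeed, $\supseteq$ follows from specialization, and $\subseteq$ follows from (a) together with $\frak P'\subseteq\frak P$. The union is finite by Lemma \ref{L:1e}(c), so $\nCM(M)$ is closed.

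For dimensions, take $\frak P\in\nCM(\widehat M)$ with $\dim\widehat R/\frak P=\dim\nCM(\widehat M)$; we may take $\frak P$ minimal in $\nCM(\widehat M)$. By (b), $F_{\frak P}$ is Cohen-Macaulay. The formula then gives $\frak p\in\nCM(M)$, and also $\dim F_{\frak P}=0$ by the minimality argument from (a)$\Rightarrow$(b). Since $\dim F_{\frak P}=\operatorname{ht}\frak P-\operatorname{ht}\frak p=0$, we get $\dim\widehat R/\frak P\leq\dim \widehat R/\frak p\widehat R=\dim R/\frak p$. Hence $\dim\nCM(M)\geq\dim\nCM(\widehat M)$, and equality follows from Lemma \ref{L:1e}(a).
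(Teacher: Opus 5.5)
Your proof is correct and follows the paper's argument essentially step for step. For (a)$\Rightarrow$(b) you take a minimal prime of $\frak p\widehat R$ inside $\frak P$ and use minimality of $\frak P$ in $\nCM(\widehat M)$; for (b)$\Rightarrow$(a) you descend to an element of $\min\nCM(\widehat M)$ and use stability of $\nCM(M)$ under specialization; and closedness comes from finiteness of $\min\nCM(\widehat M)$. In the dimension step the appeal to $\dim F_{\frak P}=0$ is superfluous: $\dim\widehat R/\frak P\leq\dim\widehat R/\frak p\widehat R=\dim R/\frak p$ holds for any $\frak P\supseteq\frak p\widehat R$, so (a) alone yields the dimension equality, as the paper notes.
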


\begin{proof} Set $X=\{\frak P\cap R\mid \frak P\in\min\nCM(\widehat M)\}.$ By Lemma \ref{L:1e}(c), $X$ is a finite set.

(a)$\Rightarrow$(b). Let  $\frak P\in\min\nCM(\widehat M)$ and  $\frak p=\frak P\cap R.$ Then we get by assumption (a) that $\frak p\in\nCM(M)$. Let $\frak P'\in\min\Var (\frak p\widehat R)$ such that $\frak P'\subseteq \frak P$. Then $\frak P'\cap R=\frak p$. As $\frak p\in\nCM(M)$, it follows by Lemma \ref {L:bs} that $\frak P'\in\nCM(\widehat M).$ Hence $\frak P=\frak P'$ by the minimality of $\frak P$. Hence  $\frak P\in \min\Var (\frak p\widehat R).$ So, $\widehat R_{\frak P}/\frak p\widehat R_{\frak P}$ is Cohen-Macaulay. 

(b)$\Rightarrow$(a).  Let $\frak P\in\nCM(\widehat M)$. Set $\frak p:=\frak P \cap R.$ Then $\frak P\supseteq \frak Q$ for some prime ideal $\frak Q\in\min\nCM (\widehat M).$ Set $\frak q:=\frak Q\cap R.$ Then $\frak p\supseteq \frak q$  and $\frak q\in X$.  As $\widehat R_{\frak Q}/\frak q\widehat R_{\frak Q}$ is Cohen-Macaulay by assumption (b) and $\frak Q\in\nCM(\widehat M)$, it follows by Lemma \ref{L:bs} that $\frak q\in\nCM (M).$ Since $\nCM(M)$ is closed under specialization, $\frak p\in\nCM (M).$ So, by Lemma \ref{L:1e}(a),
$$\nCM(M)= \{\frak P\cap R\mid \frak P\in\nCM(\widehat M)\}.$$ 

Assume that one of the equivalent conditions (a), (b) satisfies. It follows by (a) that $\nCM(M)=\bigcup_{\frak p\in X}\Var(\frak p)$.  As $X$ is a finite set, $\nCM(M)$ is a closed. Moreover, it follows by condition (a) that $\dim_R\nCM(M)= \dim_{\widehat R}\nCM(\widehat M)$. 
\end{proof} 

The following example gives a Noetherian local domain $R$ such that $\nCM(R)$ is closed, but  $\dim (\nCM(R))<\dim (\nCM(\widehat R))$ and  $\widehat R_{\frak P}/\frak p\widehat R_{\frak P}$ is not Cohen-Macaulay for some prime ideal  $\frak P\in\min \nCM(\widehat R)$ and  $\frak p=\frak P\cap R$. 

\begin{example} \label{E:1g} {\rm Consider the Noetherian local domain $(R,\frak m)$ of dimension $2$ constructed by Ferrand and Raynaud \cite{FR} such that $\widehat R$ has an embedded  prime $\frak Q$ of dimension $1$. It is clear that $\nCM(R)=\{\frak m\}$, $\frak Q\in\min\nCM(\widehat R)$ and $0=\frak Q\cap R\notin\nCM (R).$ Thus, $\dim (\nCM(R))=0,$ $\dim (\nCM(\widehat R))=1$ and $$\nCM(R)\neq \{\frak P\cap R\mid \frak P\in\nCM(\widehat R)\}.$$}
\end{example}

\section{Main results}

\ \ \ \   We prove the first main result of this paper.

\begin{theorem} \label{T:1} The following statements are true.
\begin{itemize}
\item [{\rm (a)}] $\dim (R/\frak a(M))<d$ if and only if $R/\frak p$ is unmixed and the formal fiber of  $R$ at $\frak p$ is Cohen-Macaulay for all $\frak p\in\Supp_R(M)$ with $\dim (R/\frak p)=d$.
\item[{\rm (b)}]  If $\frak p\in\Supp_R(M)$ with $\dim(R/\frak p)>\dim (R/\frak a(M))$ then  $R/\frak p$ is unmixed and the formal fiber of  $R$ at $\frak p$ is Cohen-Macaulay.
\end{itemize}
\end{theorem}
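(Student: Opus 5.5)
The plan is to pass to the completion and use local duality there. Since $\widehat R$ is a quotient of a regular (hence Gorenstein) local ring, local duality is available. Write $K^i(\widehat M)$ for the $i$-th deficiency module of $\widehat M$, so that $H^i_{\frak m}(M)\cong H^i_{\widehat{\frak m}}(\widehat M)$ is Matlis dual to the finitely generated $\widehat R$-module $K^i(\widehat M)$. Two standard facts will be used:
\begin{itemize}
\item $\Ann_{\widehat R}H^i_{\frak m}(M)=\Ann_{\widehat R}K^i(\widehat M)$;
\item for $\frak Q\in\Spec(\widehat R)$ with $t=\dim(\widehat R/\frak Q)$, the localization $K^i(\widehat M)_{\frak Q}$ is, up to Matlis duality over $\widehat R_{\frak Q}$, the module $H^{i-t}_{\frak Q\widehat R_{\frak Q}}(\widehat M_{\frak Q})$.
\end{itemize}
From these I would derive the key claim: \emph{if $\frak Q\in\Supp(\widehat M)$ and $\frak Q\not\supseteq \frak a(\widehat M)$, then $\widehat M_{\frak Q}$ is Cohen-Macaulay and $\dim\widehat M_{\frak Q}=d-\dim(\widehat R/\frak Q)$.} Indeed $K^i(\widehat M)_{\frak Q}=0$ for all $i<d$, so $H^j(\widehat M_{\frak Q})=0$ for $j<d-\dim(\widehat R/\frak Q)$, while $\dim\widehat M_{\frak Q}\le d-\dim(\widehat R/\frak Q)$ always holds. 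I will also use $\frak a(M)\widehat R\subseteq\frak a(\widehat M)$, which holds because $H^i_{\frak m}(M)$ and $H^i_{\widehat{\frak m}}(\widehat M)$ coincide with compatible module structures.

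For (b), let $\frak p\in\Supp_R(M)$ with $\dim(R/\frak p)>\dim(R/\frak a(M))$, so that $\frak p\not\supseteq\frak a(M)$. Take any $\frak Q\in\Spec(\widehat R)$ with $\frak Q\cap R=\frak p$. Then $\frak Q\not\supseteq\frak a(\widehat M)$, and $\frak Q\in\Supp(\widehat M)$ because $R_{\frak p}\to\widehat R_{\frak Q}$ is faithfully flat and $M_{\frak p}\ne0$. By the key claim, $\widehat M_{\frak Q}\cong M_{\frak p}\otimes\widehat R_{\frak Q}$ is Cohen-Macaulay of dimension $d-\dim(\widehat R/\frak Q)$.

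Applying Lemma \ref{L:bs} to $R_{\frak p}\to\widehat R_{\frak Q}$ gives $\dim\widehat M_{\frak Q}-\depth\widehat M_{\frak Q}\ge\dim\widehat R_{\frak Q}/\frak p\widehat R_{\frak Q}-\depth\widehat R_{\frak Q}/\frak p\widehat R_{\frak Q}$, with equality of the $M_{\frak p}$-parts added. Hence both $M_{\frak p}$ and the fiber $\widehat R_{\frak Q}/\frak p\widehat R_{\frak Q}$ are Cohen-Macaulay. This shows the formal fiber at $\frak p$ is Cohen-Macaulay.

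For unmixedness, take $\frak Q\in\Ass(\widehat R/\frak p\widehat R)$. The fiber then has dimension $0$, so $\dim M_{\frak p}=d-\dim(\widehat R/\frak Q)$. Since $\dim M_{\frak p}\le d-\dim(R/\frak p)$, this gives $\dim(\widehat R/\frak Q)\ge\dim(R/\frak p)$, and the reverse inequality is automatic. Therefore $R/\frak p$ is unmixed.

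The forward direction of (a) is the special case of (b) with $\dim(R/\frak p)=d>\dim(R/\frak a(M))$. For the converse, I would argue by contradiction. Suppose $\dim(R/\frak a(M))=d$, and choose $\frak p$ minimal over $\frak a(M)$ with $\dim(R/\frak p)=d$. Since $\frak a(M)\supseteq\Ann_R M$, the prime $\frak p$ is minimal in $\Supp_R(M)$, so $M_{\frak p}$ is Cohen-Macaulay of dimension $0$. Because $\frak p\supseteq\frak a_i(M)$ for some $i<d$, $\frak p$ contains a minimal element of $\Var(\frak a_i(M))$. By Lemma \ref{L:1a}(a),(b), this element is $\frak Q\cap R$ for some $\frak Q\in\Att_{\widehat R}H^i_{\frak m}(M)$, and by Lemma \ref{L:1b}(b) applied over $\widehat R$, $\dim(\widehat R/\frak Q)\le i<d$. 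As $\dim R/(\frak Q\cap R)\ge\dim(\widehat R/\frak Q)$ does not constrain things enough, I use minimality instead: $\frak Q\cap R\subseteq\frak p$ and $\frak Q\cap R\supseteq\Ann_R M$, so $\frak Q\cap R=\frak p$.

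By hypothesis the fiber $\widehat R_{\frak Q}/\frak p\widehat R_{\frak Q}$ is Cohen-Macaulay. Since $R/\frak p$ is unmixed and $\widehat R$ is catenary, this fiber has dimension $d-\dim(\widehat R/\frak Q)$. Hence $\widehat M_{\frak Q}$ is Cohen-Macaulay of that dimension by Lemma \ref{L:bs}. On the other hand, $\frak Q\in\Supp K^i(\widehat M)$, so $H^{\,i-\dim(\widehat R/\frak Q)}(\widehat M_{\frak Q})\neq0$ with $i-\dim(\widehat R/\frak Q)<d-\dim(\widehat R/\frak Q)$. This contradicts the Cohen-Macaulayness of $\widehat M_{\frak Q}$.

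The main obstacle I expect is the careful bookkeeping in the localization of deficiency modules, that is, the indices in the duality $K^i(\widehat M)_{\frak Q}\leftrightarrow H^{i-\dim(\widehat R/\frak Q)}(\widehat M_{\frak Q})$, which requires the catenarity and equidimensionality of $\widehat R$ modulo a canonical presentation. A second delicate point is the dimension count for the fiber in the converse, where the unmixedness of $R/\frak p$ is exactly what is needed.
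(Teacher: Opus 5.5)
Your proof is correct. For part (b), and hence for the forward half of (a), it takes a genuinely different and shorter route than the paper.

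The paper proves the forward half of (a) directly with attached primes:
\begin{itemize}
\item unmixedness comes from $\frak P\in\Att_{\widehat R}H^k_{\frak m}(M)$ with $k=\dim(\widehat R/\frak P)$;
\item Cohen--Macaulayness of the fiber comes from the shifted localization principle, using minimality of $\frak p$ in $\Supp_R(M)$ to force $\frak Q\cap R=\frak p$.
\end{itemize}
It then gets (b) by induction on $d-\dim(R/\frak a(M))$. It chooses a parameter element $x\in\frak p\cap\frak a(M)^3$, shows $\dim U_M(0)\le\dim(R/\frak a(M))$, and uses the splitting Lemma~\ref{L:1d} to get $\dim(R/\frak a(M/xM))\le\dim(R/\frak a(M))$, then passes to $M/xM$.

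You bypass all of this. Since $\frak a(M)\widehat R\subseteq\frak a(\widehat M)$, every $\frak Q$ lying over $\frak p$ avoids $\frak a(\widehat M)$. So $K^i(\widehat M)_{\frak Q}=0$ for all $i<d$, and localized local duality makes $\widehat M_{\frak Q}$ Cohen--Macaulay of dimension $d-\dim(\widehat R/\frak Q)$. Lemma~\ref{L:bs} then splits this into Cohen--Macaulayness of both $M_{\frak p}$ and the fiber. Specializing to $\frak Q\in\Ass(\widehat R/\frak p\widehat R)$, where the fiber has depth zero and hence dimension zero, yields unmixedness. Your converse of (a) is the paper's argument, with localized duality in place of the shifted localization principle; over $\widehat R$ these are essentially the same fact.

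What your route buys:
\begin{itemize}
\item brevity: no splitting lemma, no induction, no choice of $x$;
\item extra information: $M_{\frak p}$ is Cohen--Macaulay with $\dim M_{\frak p}+\dim(R/\frak p)=d$ whenever $\frak p\not\supseteq\frak a(M)$. This recovers the bound $\dim_R\nCM(M)\le\dim(R/\frak a(M))$ that Corollary~\ref{C:2b} imports from the literature.
\end{itemize}
The paper's route, in exchange, stays within local cohomology and attached primes over $R$ and never needs deficiency modules.

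Two minor points:
\begin{itemize}
\item You only have $(\Ann_RM)^d\subseteq\frak a(M)$, not $\Ann_RM\subseteq\frak a(M)$. This is harmless, since $\Var(\frak a(M))\subseteq\Supp_R(M)$ is all you use.
\item The index shift $i\mapsto i-\dim(\widehat R/\frak Q)$ needs only $\mathrm{ht}\,\frak Q+\dim(S/\frak Q)=\dim S$ in the regular ring $S$ of a Cohen presentation $\widehat R=S/J$. It does not need any equidimensionality of $\widehat R$, so that anticipated obstacle does not arise.
\end{itemize}
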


\begin{proof} (a) Suppose that $\dim (R/\frak a(M))<d.$ Let  $\frak p\in\Supp_R(M)$ with $\dim (R/\frak p)=d$. Then $\frak p\in\min\Ass_R(M)$. We need to prove that  $R/\frak p$ is unmixed and the generic formal fiber of  $R/\frak p$ is Cohen-Macaulay.

 Suppose in contrary that $R/\frak p$ is not unmixed. Then there exists $\frak P\in\Ass (\widehat R/\frak p\widehat R)$ such that $\dim (\widehat R/\frak P)<d.$ It follows  by \cite[Theorem 23.2]{Mat} that  $\frak p=\frak P\cap R$ and  $\frak P\in\Ass_{\widehat R}(\widehat M)$.  Set $k=\dim (\widehat R/\frak P).$  Note that $H^k_{\frak m}(M)\cong H^k_{\frak m\widehat R}(\widehat M).$ Therefore we get by Lemma \ref{L:1b}(a) that  $\frak P\in\Att_{\widehat R}H^k_{\frak m}(M)$.  Hence $\frak p\in\Att_RH^k_{\frak m}(M)$ by  Lemma \ref{L:1a}(b). It follows by Lemma \ref{L:1a}(a) that $d=\dim (R/\frak p)\leq\dim (R/\frak a_k(M)).$  Since $k<d,$ we have $\dim (R/\frak a(M))\geq d.$ This gives a contradiction.

 Let  $\frak P\in\Spec(\widehat R)$ such that $\frak p=\frak P\cap R.$  Suppose in contrary that  the ring $\widehat R_{\frak P}/\frak p\widehat R_{\frak P}$ is not Cohen-Macaulay. Since the natural map $R_{\frak p}\rightarrow \widehat R_{\frak P}$ is faithully flat and $\widehat M_{\frak P}\cong M_{\frak p}\otimes_{R_{\frak p}}\widehat R_{\frak P}$, it follows by Lemma \ref{L:bs} that
\begin{align}&\dim _{\widehat R_{\frak P}} (\widehat M_{\frak P})=\dim_{R_{\frak p}}(M_{\frak p})+\dim (\widehat R_{\frak P}/\frak p\widehat R_{\frak P})\notag\\
&\depth _{\widehat R_{\frak P}} (\widehat M_{\frak P})=\depth_{R_{\frak p}}(M_{\frak p})+\depth (\widehat R_{\frak P}/\frak p\widehat R_{\frak P}).\notag\end{align}
  Because $\widehat R_{\frak P}/\frak p\widehat R_{\frak P}$ is not Cohen-Macaulay, it follows by the above equations that $\widehat M_{\frak P}$ is not Cohen-Macaulay.  Set $\dim _{\widehat R_{\frak P}} (\widehat M_{\frak P})=t.$ Then  $H^i_{\frak P\widehat R_{\frak P}}(\widehat M_{\frak P})\neq 0$ for some $i<t.$ There exists by Lemma \ref{L:1a}(a) a prime ideal $\frak Q$ of $\widehat R$ such that $\frak Q\subseteq \frak P$ and  $\frak Q\widehat R_{\frak P}\in \Att_{\widehat R_{\frak P}}H^i_{\frak P\widehat R_{\frak P}}(\widehat M_{\frak P}).$ It follows by the shifted localization principle (see \cite[11.3.2]{BS}) that $\frak Q\in\Att_{\widehat R}H^{i+\dim (\widehat R/\frak P)}_{\frak m\widehat R}(\widehat M).$ It is clear that $\Ann_R(M)\subseteq \frak Q\cap R\subseteq \frak P\cap R=\frak p$. Since $\frak p\in\min\Ass_R(M)$, it follows that $\frak Q\cap R=\frak p.$  Hence  $\frak p\in\Att_RH^j_{\frak m}(M)$ by Lemma \ref{L:1a}(b), where  $j=i+\dim (\widehat R/\frak P).$ Hence $\frak p\supseteq \frak a_j(M)$ by Lemma \ref{L:1a}(a). As $\dim (R/\frak p)=d,$ it follows that  $\dim (R/\frak a_j(M))=d.$ Since $i<t,$ we have 
$$j< t+\dim (\widehat R/\frak P)=\dim_{\widehat R_{\frak P}}(\widehat M_{\frak P})+\dim (\widehat R/\frak P)\leq d.$$ It follows that $\dim (R/\frak a(M))=d.$ This gives a contradiction.

Conversely, suppose that $R/\frak p$ is unmixed and the generic formal fiber of  $R/\frak p$ is Cohen-Macaulay for all $\frak p\in\Supp_R(M)$ with $\dim (R/\frak p)=d$. We will show that $\dim (R/\frak a(M))<d.$ Suppose in contrary that $\dim (R/\frak a(M))=d.$ Then $\dim (R/\frak a_i(M))=d$ for some non-negative integer  $i<d$.  Let $\frak p\in\min\Var (\frak a_i(M))$ such that $\dim (R/\frak p)=d.$ Then $\frak p\in \Att_RH^i_{\frak m}(M)$ by  Lemma \ref{L:1a}(a). As $H^i_{\frak m}(M)\cong H^i_{\frak m\widehat R}(\widehat M),$ there exists by Lemma \ref{L:1a}(b) a prime ideal $\frak P\in\Att_{\widehat R}H^i_{\frak m\widehat R}(\widehat M)$ such that $\frak p=\frak P\cap R.$ Hence $\frak P\widehat R_{\frak P}\in \Att_{\widehat R_{\frak P}}H^{i-\dim (\widehat R/\frak P)}_{\frak P\widehat R_{\frak P}}(\widehat M_{\frak P})$ by the shifted localization principle, see \cite[Theorem 1.1]{NQ}.  It follows that $H^{i-\dim (\widehat R/\frak P)}_{\frak P\widehat R_{\frak P}}(\widehat M_{\frak P})\neq 0$ by Lemma \ref{L:1a}(a).  Because $\dim (R/\frak p)=d$, we get by our assumption  that $R/\frak p$ is unmixed. Let  $\frak Q\in\Ass (\widehat R/\frak p\widehat R)$ such that  $\frak Q\subseteq \frak P.$ Since $R/\frak p$ is unmixed, $\dim (\widehat R/\frak Q)=d.$ Note that $\widehat R$ is catenary, so we have 
$$\dim (\widehat R/\frak P)+\dim (\widehat R_{\frak P}/\frak p \widehat R_{\frak P})\geq \dim (\widehat R/\frak P)+\text{ht} (\frak P/\frak Q)=\dim (\widehat R/\frak Q)=d.$$ Hence $\dim (\widehat R/\frak P)+\dim (\widehat R_{\frak P}/\frak p \widehat R_{\frak P})=d.$ Therefore,
$$i-\dim (\widehat R/\frak P)<d-\dim (\widehat R/\frak P)=\dim (\widehat R_{\frak P}/\frak p \widehat R_{\frak P}).$$ Since the map $R_{\frak p}\rightarrow \widehat R_{\frak P}$ is faithfully flat and $\frak p\in\min\Ass_R(M)$, it follows  by Lemma \ref{L:bs} that
$$\dim_{\widehat R_{\frak P}}(\widehat M_{\frak P})=\dim_{R_{\frak p}} (M_{\frak p})+\dim (\widehat R_{\frak P}/\frak p \widehat R_{\frak P})=\dim (\widehat R_{\frak P}/\frak p \widehat R_{\frak P}).$$
 Hence $i-\dim (\widehat R/\frak P)<\dim_{\widehat R_{\frak P}}(\widehat M_{\frak P}).$ Since $H^{i-\dim (\widehat R/\frak P)}_{\frak P\widehat R_{\frak P}}(\widehat M_{\frak P})\neq 0$, it follows that  $\widehat M_{\frak P}$ is not Cohen-Macaulay.  As   $\frak p\in\min\Ass_R(M)$, it is clear that  $M_{\frak p}$ is Cohen-Macaulay of dimension $0$. So, it follows  by Lemma \ref{L:bs} for the flat local homomorphism $R_{\frak p}\to \widehat R_{\frak P}$ that $\widehat R_{\frak P}/\frak p\widehat R_{\frak P}$ is not Cohen-Macaulay.  This gives a contradiction.

(b) Set $\dim (R/\frak a(M)):=d-n$.  We use induction on $n$. The case $n=0$ is nothing to prove.  The case $n=1$ follows immediately by assertion (a). Now let $n\geq 2.$ Let $\frak p\in\Supp_R(M)$ with $\dim (R/\frak p)>d-n.$ If $\dim (R/\frak p)=d$ then $R/\frak p$ is unmixed and the formal fiber of  $R$ at $\frak p$ is Cohen-Macaulay by the assertion (a).  So, we assume that $\dim (R/\frak p)<d.$ As $\dim (R/\frak a(M))=d-n<d,$ we have $\dim\big(R/(\frak p\cap \frak a(M)^3)\big)<d$. Therefore, we can choose an element $x\in \frak p\cap \frak a(M)^3$ such that $x$ is a parameter of $M$. Denote by $U_M(0)$ the largest submodule of $M$ of dimension less than $d$. Set $\overline M:=M/U_M(0).$ Then we get by Lemma \ref{L:1d} the following isomorphisms
 $$H^i_{\frak m}(M/xM)\cong H^i_{\frak m}(M)\oplus H^{i+1}_{\frak m}(\overline{M})$$
for all $i\leq d-2.$ Set $k:=d-n.$ We first claim that $\dim U_M(0)\leq k$. In fact, suppose in contrary that $\dim U_M(0)> k$. Let $\frak q\in\Ass_R(U_M(0))$ such that $\dim (R/\frak q)=\dim_RU_M(0).$ Set $t:=\dim (R/\frak q).$ Then $k<t<d.$ Note that $\frak q\in\Ass_R(M).$ Therefore $\frak q\in\Att_R H^t_{\frak m}(M)$ by Lemma \ref{L:1b}(a). So we have by Lemma \ref{L:1a}(a) that  $\dim (R/\frak a_t(M))\geq \dim (R/\frak q)=t>k$. Since $t<d,$ we have $\dim (R/\frak a(M))\geq t>k.$ This gives a contradiction. So, the claim is proved. From the exact sequence
$0\to U_M(0)\to M\to \overline M\to 0$, we get the exact sequences
$$ H^i_{\frak m}(M)\to H^i_{\frak m}(\overline M)\to H^{i+1}_{\frak m}(U_M(0))$$
for all $i<d.$ Since $\dim U_M(0)\leq k$ by the claim and $\dim (R/\frak a_i(M))\leq k$, we have by the above exact sequences that  $\dim (R/\frak a_i(\overline M))\leq k$ for all $i<d.$ Therefore, it follows by the above isomorphisms that $\dim (R/\frak a_i(M/xM))\leq k$ for all $i\leq d-2.$ So, we have $\dim (R/\frak a(M/xM))\leq k.$ Since $\frak p\in\Supp_R(M)$ and $x\in\frak p$, it follows that $\frak p\in\Supp_R(M/xM).$ Note that $\dim (M/xM)=d-1$ and $\dim (R/\frak p)>k=(d-1)-(n-1).$ Therefore, we get by induction that $R/\frak p$ is unmixed and the formal fiber of  $R$ at $\frak p$ is Cohen-Macaulay.
\end{proof}

\begin{corollary} \label{C:1d} Denote by $U_R(0)$ the largest submodule of $R$ of dimenssion less than $\dim (R)$.  The following statements are equivalent:
	\begin{itemize}
		\item[\rm{(a)}] $\dim (R/\frak a(R))<\dim (R).$
		\item[\rm{(b)}]  $\dim (R/\frak a(M))<\dim (R)$ for any finitely generated $R$-module $M$ of dimension $\dim (R)$. 
	\end{itemize}
	If the conditions (a), (b) satisfy then  $R/U_R(0)$ is universally catenary and the formal fiber of $R$ at $\frak p$ is Cohen-Macaulay for any prime ideal $\frak p$ of $R$ with $\dim (R/\frak p)=\dim (R)$.
\end{corollary}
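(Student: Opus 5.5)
The plan is to prove (a)$\Leftrightarrow$(b) first and then derive the last assertion from Theorem \ref{T:1}(a) together with Lemma \ref{L:1c}.

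The implication (b)$\Rightarrow$(a) is trivial: take $M=R$.

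For (a)$\Rightarrow$(b), let $M$ be finitely generated with $\dim_R(M)=\dim(R)=d$. We apply Theorem \ref{T:1}(a) to the module $R$. This gives that for every $\frak p\in\Spec(R)$ with $\dim(R/\frak p)=d$, the ring $R/\frak p$ is unmixed and the formal fiber of $R$ at $\frak p$ is Cohen-Macaulay. Every $\frak p\in\Supp_R(M)$ with $\dim(R/\frak p)=d$ is such a prime. So the right-hand condition of Theorem \ref{T:1}(a) holds for $M$, and the converse direction of Theorem \ref{T:1}(a) gives $\dim(R/\frak a(M))<d$.

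For the final assertion, the Cohen-Macaulayness of the formal fibers at primes with $\dim(R/\frak p)=d$ is exactly what Theorem \ref{T:1}(a) with $M=R$ gives. The remaining point is that $S:=R/U_R(0)$ is universally catenary. Its minimal primes are exactly the $\frak p\in\Spec(R)$ with $\dim(R/\frak p)=d$, since $U_R(0)$ is the intersection of the primary components belonging to these primes. For each such $\frak p$, the domain $R/\frak p$ is unmixed in Nagata's sense, so $\widehat{R/\frak p}$ is equidimensional. By Ratliff's theorem, a Noetherian local domain with equidimensional completion is universally catenary, so $R/\frak p$ is universally catenary.

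To pass to $S$, we use that a local ring is universally catenary if and only if $S/\frak q$ is universally catenary for every minimal prime $\frak q$ of $S$. Equivalently, one can use Ratliff's criterion that $S$ is universally catenary if and only if $\widehat{S/\frak q}$ is equidimensional for all $\frak q\in\min\Spec(S)$. Either form gives that $S$ is universally catenary.

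The main obstacle is this last step: the paper has not stated Ratliff's theorem, so it must be cited, for instance from \cite{Mat}, Theorem 31.7 and its corollaries. One must also check that unmixedness in the sense of \cite{Na} implies the completion is equidimensional. This is immediate, because every minimal prime of $\widehat{R/\frak p}$ is associated and therefore has dimension $d$.
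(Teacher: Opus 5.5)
Your proposal is correct and follows the paper's proof essentially verbatim. You prove (b)$\Rightarrow$(a) with $M=R$, and (a)$\Rightarrow$(b) by applying Theorem \ref{T:1}(a) to $R$ and then its converse direction to $M$; universal catenarity of $R/U_R(0)$ comes, as in the paper, from its associated primes being exactly the $\frak p$ with $\dim(R/\frak p)=\dim(R)$, via Ratliff's theorem \cite[Theorems 31.6, 31.7]{Mat}. The only slip is the opening mention of Lemma \ref{L:1c}: your argument never uses it, and it could not be used, since not all formal fibers of $R$ are known to be Cohen--Macaulay.
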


\begin{proof} The direction (b)$\Rightarrow$(a) is obvious.
	
	(a)$\Rightarrow$(b). Let $M$ be a finitely generated $R$-module of dimension $\dim (R)$. Assume that $\frak p\in\Supp_R(M)$ with $\dim (R/\frak p)=\dim (R)$. Then, we get by assumption (a) and Theorem \ref{T:1} that  $R/\frak p$ is  unmixed and the formal fiber of $R$ at $\frak p$ is Cohen-Macaulay. Therefore $\dim (R/\frak a(M))<\dim (R)$ by Theorem \ref{T:1}.  
	
	Note that $\Ass_R(R/U_R(0))=\{\frak p\in\Spec(R)\mid \dim (R/\frak p)=\dim (R)\}.$ Therefore, the rest statement follows immediately by \cite[Theorems 31.6, 31.7]{Mat}.   
\end{proof}

\begin{corollary} \label{C:2a} Suppose that $\dim (R/\frak a(M))\leq 1$. Then $R/\Ann_R(M)$ is a quotient of a Cohen-Macaulay local ring. In particular, $\nCM(M)=\{\frak P\cap R\mid \frak P\in\nCM(\widehat M)\}$, $\nCM(M)$ is  closed under Zariski topology and $\dim_R\nCM(M)=\dim_{\widehat R}\nCM(\widehat M).$
\end{corollary}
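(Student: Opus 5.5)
Set $S:=R/\Ann_R(M)$; then $M$ is a faithful finitely generated $S$-module with $\dim S=d$. Local cohomology is unchanged when computed over $S$ (with respect to $\frak m S$), so $\frak a_i(M)$ computed over $S$ is the image of $\frak a_i(M)\supseteq\Ann_R(M)$. Hence $\dim(S/\frak a(M)S)=\dim(R/\frak a(M))\le 1$, and we may assume $M$ is faithful over $R$. By Lemma \ref{L:1c}, it suffices to show that $R$ is universally catenary and that all formal fibers of $R$ are Cohen-Macaulay.

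\emph{Formal fibers.} Let $\frak p\in\Spec(R)=\Supp_R(M)$. If $\dim(R/\frak p)\ge 2>\dim(R/\frak a(M))$, then Theorem \ref{T:1}(b) shows that $R/\frak p$ is unmixed and the formal fiber at $\frak p$ is Cohen-Macaulay. If $\dim(R/\frak p)=1$, then $R/\frak p$ is a one-dimensional local domain. Its completion therefore has all associated primes of dimension $1$ (no embedded $\frak m$, since depth is at least $1$), and the formal fiber ring $\widehat R\otimes_R k(\frak p)$ is a localization of $\widehat R/\frak p\widehat R$ at primes lying over $\frak p$, i.e.\ at minimal primes, so it is zero-dimensional and hence Cohen-Macaulay. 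If $\frak p=\frak m$, the fiber is a field. So all formal fibers are Cohen-Macaulay.

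\emph{Universal catenarity.} By Ratliff's criterion (\cite[Theorem 31.7]{Mat}), it is enough to show that $R/\frak p$ is formally equidimensional, i.e.\ unmixed in the Nagata sense, for every $\frak p$; indeed quasi-unmixedness of each $R/\frak p$ is equivalent to $R$ being universally catenary. For $\dim(R/\frak p)\ge 2$, unmixedness is supplied by Theorem \ref{T:1}(b). For $\dim(R/\frak p)\le 1$, it follows from the dimension-one argument above together with the trivial case $\frak p=\frak m$. Thus $R$ is universally catenary, and Lemma \ref{L:1c} shows that $R$ (originally $R/\Ann_R(M)$) is a quotient of a Cohen-Macaulay local ring.

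\emph{Consequences.} Since all formal fibers of $S=R/\Ann_R(M)$ are Cohen-Macaulay, condition (b) of Lemma \ref{P:1e} holds for $M$ viewed as an $S$-module. The completion $\widehat S=\widehat R/\Ann_R(M)\widehat R$ satisfies $\widehat M_{\frak P}$ computed over $\widehat S$ or $\widehat R$ alike, and contracting primes through $S$ or through $R$ gives the same sets. Lemma \ref{P:1e} then yields $\nCM(M)=\{\frak P\cap R\mid\frak P\in\nCM(\widehat M)\}$, the closedness of $\nCM(M)$, and $\dim_R\nCM(M)=\dim_{\widehat R}\nCM(\widehat M)$. The main point needing care is the reduction to $S$ and the bookkeeping that Lemma \ref{P:1e}(b) for $R$ only involves fibers over primes in $\Supp(M)$, which are exactly the primes of $S$.
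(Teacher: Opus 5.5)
Your proposal is correct and follows essentially the same route as the paper: split on $\dim(R/\frak p)$, use Theorem~\ref{T:1}(b) when $\dim(R/\frak p)\ge 2$ and handle dimensions $1$ and $0$ directly, get universal catenarity from Ratliff's criterion (Matsumura's Theorems 31.6--31.7), then conclude with Lemma~\ref{L:1c} and Lemma~\ref{P:1e}. The only differences are cosmetic: you make explicit the passage to $S=R/\Ann_R(M)$, which the paper leaves implicit, and in dimension one you argue via $\depth(R/\frak p)\ge 1$ where the paper uses that associated primes of $\widehat R/\frak p\widehat R$ contract to $\frak p$ (also, formally equidimensional is strictly weaker than Nagata-unmixed, not synonymous with it, but you prove the stronger property, so nothing breaks).
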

 
\begin{proof} Let $\frak p\in\Supp_R(M)$.  If $\dim (R/\frak p)\geq 2$ then we have by Theorem \ref{T:1} that $R/\frak p$ is unmixed and the formal fiber of  $R$ at $\frak p$ is Cohen-Macaulay. Assume that $\dim (R/\frak p)= 1$. Let $\frak P\in \Ass(\widehat{R}/\frak p\widehat{R}).$ Then $\frak P\cap R=\frak p.$ Hence $\frak P\neq \frak m\widehat{R}.$ Therefore $\dim(\widehat{R}/\frak P)=1.$ So, $R/\frak p$ is unmixed and the formal fiber of  $R$ at $\frak p$ is Cohen-Macaulay.  If $\dim(R/\frak p)=0$ then it is clear that  $R/\frak p$ is unmixed and the formal fiber of  $R$ at $\frak p$ is Cohen-Macaulay. Since $R/\frak p$ is unmixed for all $\frak p\in\Supp_R(M),$ it follows by \cite[Theorems 31.6, 31.7]{Mat} that $R/\Ann_R(M)$ is universally catenary. Therefore, it follows by Lemma \ref{L:1c} that $R/\Ann_R(M)$ is a quotient of a Cohen-Macaulay local ring. The rest statement follows by Lemma \ref{P:1e}.
\end{proof}

The following example shows that the assumption $\dim (R/\frak a(M))\leq 1$ in Corollary \ref{C:2a} is necessary.

\begin{example} \label{E:2a} {\rm Consider the Noetherian local domain $(R,\frak m)$ of dimension $2$ constructed by  Ferrand and  Raynaud \cite{FR} such that $R$ has an embedded prime $\frak Q$ with $\dim (\widehat R/\frak Q)=1$. Then $R$ is not a quotient of a Cohen-Macaulay local ring. Note that $\frak Q\in\Att_{\widehat R}H^1_{\frak m}(R)$ by Lemma \ref{L:1b}(a). As $\frak Q\in\Ass (\widehat R)$, we have $\frak Q\cap R\in\Ass (R)$. Since $R$ is a domain, $\frak Q\cap R=0.$ Hence $0\in\Att_RH^1_{\frak m}(R)$ by Lemma \ref{L:1a}(b). Therefore, $\Ann_RH^1_{\frak m}(R)=0$ by Lemma \ref{L:1a}(a). Therefore, $\frak a(R)=0$ and hence $\dim (R/\frak a(R))=2.$} 
\end{example}

\begin{corollary} \label{C:2b}  Suppose that $\dim (R/\frak a(M))=k$. Then the set $$\{\frak p\in \min\nCM(M)\mid \dim (R/\frak p)\geq k-1\}$$ is finite. In particular, if $\dim (R/\frak a(M))\leq 2$ then $\nCM(M)$ is  closed under Zariski topology.
\end{corollary}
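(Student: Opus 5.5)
The plan is to reduce everything to the finite set $\min\nCM(\widehat M)$ of Lemma \ref{L:1e}(c), using Theorem \ref{T:1}(b) to control formal fibers in dimension $>k$. Throughout, $\frak p$ denotes an element of $\min\nCM(M)$ with $\dim(R/\frak p)\geq k-1$.

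First I produce a prime of the completion attached to $\frak p$. By Lemma \ref{L:1e}(a) there is $\frak P'\in\nCM(\widehat M)$ with $\frak P'\cap R=\frak p$. By Lemma \ref{L:1e}(c) there is $\frak Q\in\min\nCM(\widehat M)$ with $\frak Q\subseteq\frak P'$. Set $\frak q=\frak Q\cap R$, so that $\frak q\subseteq\frak p$. If $\frak q=\frak p$, then $\frak p$ lies in the finite set $X=\{\frak Q\cap R\mid \frak Q\in\min\nCM(\widehat M)\}$, and we are done for this $\frak p$.

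Next, assume $\frak q\subsetneq\frak p$. Then $\dim(R/\frak q)>\dim(R/\frak p)\geq k-1$, so $\dim(R/\frak q)\geq k$.

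\emph{Case $\dim(R/\frak q)>k$.} Theorem \ref{T:1}(b) says the formal fiber of $R$ at $\frak q$ is Cohen--Macaulay, so $\widehat R_{\frak Q}/\frak q\widehat R_{\frak Q}$ is Cohen--Macaulay. Lemma \ref{L:bs}, applied to the faithfully flat map $R_{\frak q}\to\widehat R_{\frak Q}$, then forces $M_{\frak q}$ to be non Cohen--Macaulay. This contradicts the minimality of $\frak p$ in $\nCM(M)$. Hence this case is impossible.

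\emph{Case $\dim(R/\frak q)=k$.} This forces $\dim(R/\frak p)=k-1$. The same argument shows that the formal fiber at $\frak q$ must fail to be Cohen--Macaulay. I then want to show that $\frak q$ is a minimal prime of $\frak a(M)$ of dimension $k$, and that $\frak p$ is one of finitely many primes over such $\frak q$.

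The first half would imitate the first part of the proof of Theorem \ref{T:1}(a). The non Cohen--Macaulay fiber makes $\widehat M_{\frak Q}$ non Cohen--Macaulay. Shifted localization then yields an attached prime $\frak Q''\subseteq\frak Q$ of some $H^j_{\frak m}(\widehat M)$ with $j<d$. Its contraction lies in $\Att_R H^j_{\frak m}(M)$ by Lemma \ref{L:1a}(b), hence contains $\frak a_j(M)$ by Lemma \ref{L:1a}(a). Since it is contained in $\frak q$ and $\dim(R/\frak a(M))=k$, it must equal $\frak q$, and then $\frak q\in\min\Var(\frak a(M))$. That set is finite.

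For the second half I would pass to $M':=M/xM$ with $x\in\frak p\cap\frak a(M)^3$ a parameter element, exactly as in the proof of Theorem \ref{T:1}(b). Lemma \ref{L:1d} gives $\dim(R/\frak a(M'))\leq k$, and $\frak p\in\Supp(M')$ still has $\dim(R/\frak p)\geq k-1$. I would then try to show that $\frak p$ is forced to be a contraction of a minimal non Cohen--Macaulay prime of $\widehat{M'}$, or of $\widehat M$, lying over a minimal prime of $\frak a(M)$. This is the step I expect to be the main obstacle: controlling codimension-one primes above a bad fiber is the one place where Theorem \ref{T:1} gives no information.

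Finally, the particular statement follows quickly. If $k\leq 2$, then every $\frak p\in\min\nCM(M)$ has $\dim(R/\frak p)\geq 0\geq k-2$. The primes with $\dim(R/\frak p)\geq k-1$ are finite by the above. Primes with $\dim(R/\frak p)=0$ can only be $\frak m$, which is at most one further prime (and this covers $k-2=0$). So $\min\nCM(M)$ is finite, and $\nCM(M)$ is closed by Lemma \ref{L:1e}(b).
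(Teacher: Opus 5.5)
Your reduction to the finite set $X=\{\frak Q\cap R\mid \frak Q\in\min\nCM(\widehat M)\}$ is sound, and so are your case $\dim(R/\frak q)>k$ (via Theorem \ref{T:1}(b) and Lemma \ref{L:bs}) and your derivation of closedness for $k\leq 2$. The gap is the case $\dim(R/\frak p)=k-1$, $\dim(R/\frak q)=k$, which you leave open. Showing that $\frak q$ is a minimal prime of $\frak a(M)$ only bounds the possible $\frak q$. It does not bound the primes $\frak p\supsetneq\frak q$ of dimension $k-1$, and there may be infinitely many such primes over a fixed $\frak q$. The passage to $M/xM$ is not carried out either. So, as written, the proposal does not prove finiteness.

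The difficulty comes from your choice of $\frak P'$. You take an arbitrary prime of $\nCM(\widehat M)$ over $\frak p$, as the statement of Lemma \ref{L:1e}(a) provides, and so you lose control of $\dim(\widehat R/\frak Q)$. The missing idea is to choose $\frak P'\in\min\Var(\frak p\widehat R)$ with $\dim(\widehat R/\frak P')=\dim(R/\frak p)$, which is what the proof of Lemma \ref{L:1e}(a) actually does. This $\frak P'$ lies in $\nCM(\widehat M)$ because the fiber $\widehat R_{\frak P'}/\frak p\widehat R_{\frak P'}$ is zero-dimensional. In your bad case, $\frak Q\subsetneq\frak P'$ then gives $\dim(\widehat R/\frak Q)\geq\dim(R/\frak p)+1=k$. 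On the other hand, $\frak q\widehat R\subseteq\frak Q$ gives $\dim(\widehat R/\frak Q)\leq\dim(\widehat R/\frak q\widehat R)=\dim(R/\frak q)=k$. Hence $\frak Q\in\min\Var(\frak q\widehat R)$, so $\widehat R_{\frak Q}/\frak q\widehat R_{\frak Q}$ is zero-dimensional and thus Cohen-Macaulay. Lemma \ref{L:bs} then yields $\frak q\in\nCM(M)$, contradicting the minimality of $\frak p$. So this case never occurs and every $\frak p$ in question lies in $X$; neither $\min\Var(\frak a(M))$ nor $M/xM$ is needed. This is exactly the paper's argument. The paper also quotes the bound $\dim_R\nCM(M)\leq k$ to restrict to $\dim(R/\frak p)\in\{k,k-1\}$, but your handling of the case $\dim(R/\frak q)>k$ shows that citation can be avoided.
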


\begin{proof} Set $X=\{\frak P\cap R\mid \frak P\in\min\nCM(\widehat M)\}.$ Then $X$ is a finite set. Let $\frak p\in\min\nCM(M)$ such that $\dim (R/\frak p)\geq k-1$. We will show that $\frak p\in X$. 
	
	Note that $\dim_R(\nCM(M))\leq k$ by \cite[Theorem 1.1]{NNK}. Therefore, $\dim (R/\frak p)=k$ or $\dim (R/\frak p)=k-1$. Let $\frak P\in\min\Var (\frak p\widehat R)$ such that $\dim (\widehat R/\frak P)=\dim (R/\frak p).$  Then $\frak p=\frak P\cap R.$ Since $\frak p\in\nCM(M),$ it follows by Lemma \ref{L:bs} that  $\frak P\in\nCM(\widehat M).$ Let $\frak Q\in\min \nCM(\widehat M)$ such that $\frak Q\subseteq \frak P.$ Set $\frak q:=\frak Q\cap R.$ It is clear that $\frak q\subseteq \frak p$ and $\frak q\in\Supp_R(M)$.  Consider the following two cases:

{\it Case 1: $\dim (R/\frak p)=k.$}  Then $\dim (\widehat R/\frak P)=k$ and $\dim (R/\frak q)\geq k.$ If $\frak Q\neq \frak P$ then $\dim (\widehat R/\frak Q)>k.$ Hence $\dim (R/\frak q)>k$ and hence $\frak p\neq \frak q.$  So, we get by Theorem \ref{T:1} that $\widehat R_{\frak Q}/\frak q\widehat R_{\frak Q}$ is Cohen-Macaulay. As $\frak Q\in \nCM(\widehat M)$, it follows by Lemma \ref{L:bs} that $\frak q\in\nCM(M)$. This is impossible since $\frak p\in\min\nCM(M)$. Therefore $\frak Q=\frak P$. Hence $\frak p\in X.$

{\it Case 2: $\dim (R/\frak p)=k-1.$} Then $\dim (\widehat R/\frak P)=k-1.$ Suppose that $\frak Q\neq \frak P$. Then $\dim (\widehat R/\frak Q)\geq k.$ Hence $\dim (R/\frak q)\geq k$ and hence $\frak p\neq \frak q.$ If $\dim (R/\frak q)=k$ then we have $\frak Q\in\min\Var(\frak q\widehat R), $ so $\widehat R_{\frak Q}/\frak q\widehat R_{\frak Q}$ is Cohen-Macaulay of dimension $0$. Since $\frak Q\in \nCM(\widehat M)$, we have $\frak q\in\nCM(M)$ by Lemma \ref{L:bs}. This is impossible by the minimality of $\frak p$ in $\nCM (M)$. Therefore, $\dim (R/\frak q)>k.$ Hence $\widehat R_{\frak Q}/\frak q\widehat R_{\frak Q}$ is Cohen-Macaulay by Theorem \ref{T:1}. Note that $\frak Q\in \nCM(\widehat M)$. Therefore we have by Lemma \ref{L:bs} that $\frak q\in\nCM(M)$. This is impossible because $\frak p\in\min\nCM (M)$. Hence $\frak Q=\frak P$, so $\frak p\in X.$ 

Thus, in any case we have 
$$\{\frak p\in\min\nCM(M)\mid \dim (R/\frak p)\geq k-1\}\subseteq X.$$
 Hence $\{\frak p\in\min\nCM(M)\mid \dim (R/\frak p)\geq k-1\}$ is a finite set. 

Now, assume that $\dim (R/\frak a(M))\leq 2.$  If $\dim (R/\frak a(M))\leq 1$ then the result follows by Corollary \ref{C:2a}. So we assume that $\dim (R/\frak a(M))=2.$ It follows by the above fact that  the set $\{\frak p\in\min\nCM(M)\mid \dim (R/\frak p)\geq 1\}$ is a finite set. Hence $\min\nCM(M)$ is a finite set. Therefore, $\nCM(M)$ is closed  under Zariski topology.  
\end{proof}

The following example shows that there exists a Noetherian local domain $(R,\frak m)$ such that $\dim (R/\frak a(R))=3$ and the $\nCM(R)$ is not closed under Zariski topology. Therefore, the assumption  $\dim (R/\frak a(M))\leq 2$ in Corollary \ref{C:2b} is necessary.

\begin{example}\label{E:2b} {\rm There exists by \cite[Example 3.1]{BS1} a Noetherian local domain $(R,\frak m)$ of dimension $3$ with the following two properties:
\begin{itemize}
	\item [{\rm (a)}] $\widehat R$ can be identified with $B/\frak I$, where
	$B:=\Bbb Q[[V_1, V_2, X, Y]]$ and $\frak I:= (V_1V_2)\cap (V_1^2,V_2^2)$,
	and $V_1, V_2, X, Y$ are independent indeterminates over $\Bbb Q$;
	\item [{\rm (b)}] $\nCM (R)=\{\frak p\in\Spec(R)\mid \depth (R_{\frak p})=\dim(R_{\frak p})-1\}$ which is not closed under Zariski topology.
\end{itemize}
Set $\frak P:=(V_1, V_2).$ As $\frak P\in\Ass (\widehat R)$ and $\dim (\widehat R/\frak P)=2,$ we get by Lemma \ref{L:1b}(a) that $\frak P\in\Att_{\widehat R}H^2_{\frak m}(R).$ Since $R$ is a domain and $\frak P\in\Ass (\widehat R)$, we have $\frak P\cap R=0.$ Hence $0\in \Att_RH^2_{\frak m}(R)$ by Lemma \ref{L:1a}(b). Therefore $\Ann_RH^2_{\frak m}(R)=0$ by Lemma \ref{L:1a}(a). Hence $\frak a(R)=0$ and hence $\dim (R/\frak a(R))=3.$}
\end{example}

Now we prove the second main result of this paper.

 \begin{theorem} \label{T:2}  Let $H^0_{\frak m}(M)=D_t\subset \ldots \subset D_1\subset D_0=M$ be the dimension filtration of $M$. For each $i\geq 1,$ set ${\cal D}_i:=D_{i-1}/D_i$. The following statements are true.
	\begin{itemize}
		\item [{\rm (a)}] If $\dim (R/\frak a({\cal D}_i))\leq 1$ for all $i$ then $R/\Ann_R(M)$ is a quotient of a Cohen-Macaulay local ring, $\nCM(M)$ is closed under Zariski topology and $\dim(\nCM(M))=\dim(\nCM(\widehat M)).$
		\item[{\rm (b)}]  If $\dim (R/\frak a({\cal D}_i))\leq 2$ for all $i$ then $R/\Ann_R(M)$ is catenary and $\nCM(M)$ is closed under Zariski topology.
	\end{itemize}
\end{theorem}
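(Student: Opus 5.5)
We reduce to the modules ${\cal D}_i$, to which Corollary \ref{C:2a} and Corollary \ref{C:2b} apply directly, and then glue. Two standard facts about the dimension filtration are used. First, each ${\cal D}_i$ is unmixed in the naive sense: $\Ass_R({\cal D}_i)=\{\frak p\in\Ass_R(M)\mid \dim(R/\frak p)=\dim {\cal D}_i\}$. Second, $\Ass_R(M)=\bigcup_i\Ass_R({\cal D}_i)$, so $\Supp_R(M)=\bigcup_i\Supp_R({\cal D}_i)$ and $\sqrt{\Ann_R(M)}=\bigcap_i\sqrt{\Ann_R({\cal D}_i)}$.

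For (a), apply Corollary \ref{C:2a} to each ${\cal D}_i$. Its proof shows that for every $\frak p\in\Supp_R({\cal D}_i)$ the ring $R/\frak p$ is unmixed and the formal fiber of $R$ at $\frak p$ is Cohen-Macaulay. Hence the same holds for every $\frak p\in\Supp_R(M)$, in particular for every minimal prime of $\Ann_R(M)$. By \cite[Theorems 31.6, 31.7]{Mat}, $R/\Ann_R(M)$ is then universally catenary. The formal fibers of $R/\Ann_R(M)$ are formal fibers of $R$ at primes of $\Supp_R(M)$, so they are Cohen-Macaulay, and Lemma \ref{L:1c} gives that $R/\Ann_R(M)$ is a quotient of a Cohen-Macaulay local ring.

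Since $\nCM(M)$ does not change when $R$ is replaced by $R/\Ann_R(M)$, and completion commutes with this replacement, it suffices to check condition (b) of Lemma \ref{P:1e}. Take $\frak P\in\min\nCM(\widehat M)$ and $\frak p=\frak P\cap R$. Then $\frak p\in\Supp_R(M)$, so by the above the formal fiber at $\frak p$ is Cohen-Macaulay, and therefore its localization $\widehat R_{\frak P}/\frak p\widehat R_{\frak P}$ is Cohen-Macaulay. Lemma \ref{P:1e} now yields that $\nCM(M)$ is closed and $\dim\nCM(M)=\dim\nCM(\widehat M)$.

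For (b), catenarity follows the same pattern. By Corollary \ref{C:2b} and Theorem \ref{T:1}(b), every $\frak p\in\Supp_R({\cal D}_i)$ with $\dim(R/\frak p)\geq 3$ has $R/\frak p$ unmixed. If $\dim(R/\frak p)\leq 1$, then $R/\frak p$ is automatically unmixed, as in the proof of Corollary \ref{C:2a}. If $\dim(R/\frak p)=2$, then $R/\frak p$ is a two-dimensional local domain, and such a domain is always catenary. So for every minimal prime $\frak p$ of $\Ann_R(M)$ the domain $R/\frak p$ is catenary: when $\dim(R/\frak p)\geq 3$ this comes from unmixedness and \cite[Theorem 31.7]{Mat}, and when $\dim(R/\frak p)\leq 2$ it is automatic. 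A local ring is catenary if and only if $R/\frak p$ is catenary for each minimal prime $\frak p$, so $R/\Ann_R(M)$ is catenary.

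Closedness of $\nCM(M)$ is the main obstacle, since $\nCM(M)$ is not simply the union of the loci $\nCM({\cal D}_i)$. By Lemma \ref{L:1e}(b) it suffices to show that $\min\nCM(M)$ is finite. I would try to prove the following. Let $\frak p\in\min\nCM(M)$, and choose $\frak P\in\min\Var(\frak p\widehat R)$ with $\dim(\widehat R/\frak P)=\dim(R/\frak p)$; then $\frak P\in\nCM(\widehat M)$. Let $\frak Q\in\min\nCM(\widehat M)$ with $\frak Q\subseteq\frak P$, and put $\frak q=\frak Q\cap R$. If $\frak Q\neq\frak P$, then $\dim(R/\frak q)>\dim(R/\frak p)$. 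If moreover the formal fiber at $\frak q$ is Cohen-Macaulay, then $\frak q\in\nCM(M)$ by Lemma \ref{L:bs}, which contradicts minimality; this forces $\frak Q=\frak P$, hence $\frak p$ lies in the finite set $\{\frak Q\cap R\mid \frak Q\in\min\nCM(\widehat M)\}$.

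The formal fiber at $\frak q$ is known to be Cohen-Macaulay whenever $\dim(R/\frak q)\geq 3$, by Theorem \ref{T:1}(b) applied to any ${\cal D}_i$ whose support contains $\frak q$. The remaining cases are those with $\dim(R/\frak q)\leq 2$, i.e.\ $\dim(R/\frak p)\leq 1$. Primes of dimension $0$ contribute only $\frak m$. For $\dim(R/\frak p)=1$ one has $\dim(R/\frak q)=2$, and I would handle this exactly as in Case 2 of the proof of Corollary \ref{C:2b}. If $\dim(\widehat R/\frak Q)=2$, then $\frak Q\in\min\Var(\frak q\widehat R)$, so the fiber $\widehat R_{\frak Q}/\frak q\widehat R_{\frak Q}$ is zero-dimensional, hence Cohen-Macaulay, and the contradiction follows as before. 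If $\dim(\widehat R/\frak Q)>2$, then $\dim(R/\frak q)\geq\dim(\widehat R/\frak Q)\geq 3$, which contradicts $\dim(R/\frak q)=2$, so this case cannot occur. Therefore $\min\nCM(M)$ is finite, and $\nCM(M)$ is closed.
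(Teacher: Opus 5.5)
Your proof is correct. Part (a) and the catenarity half of (b) follow the paper's argument. In (a), your appeal to the proof of Corollary~\ref{C:2a}, which treats primes with $\dim(R/\frak p)=1$ separately, is the accurate justification, since Theorem~\ref{T:1}(b) alone does not cover them.

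For closedness in (b) you take a genuinely different route. The paper fixes $\frak p\in\min\nCM(M)$ and picks the least $i$ with $(D_i)_{\frak p}\neq M_{\frak p}$. It then shows $\frak p\in\min\nCM({\cal D}_i)\cup\min\Supp_R({\cal D}_i)$; the second case uses the catenarity of $R/\Ann_R(M)$ just established. Finally it applies Corollary~\ref{C:2b} to each ${\cal D}_i$.

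You instead rerun the proof of Corollary~\ref{C:2b} directly on $M$. The only input that argument needs is Cohen--Macaulayness of the formal fibers at primes $\frak q\in\Supp_R(M)$ with $\dim(R/\frak q)\geq 3$. You get this from Theorem~\ref{T:1}(b) applied to a factor ${\cal D}_i$ whose support contains $\frak q$. The case $\dim(R/\frak q)=2$ is settled because then $\frak Q$ is minimal over $\frak q\widehat R$, so the fiber is zero-dimensional.

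Your route has several advantages:
\begin{itemize}
\item it is shorter;
\item it makes closedness independent of catenarity;
\item it needs neither the statement of Corollary~\ref{C:2b} nor the external bound $\dim_R\nCM(M)\leq\dim(R/\frak a(M))$ used in that corollary's proof;
\item it yields the explicit containment $\min\nCM(M)\subseteq\{\frak Q\cap R\mid \frak Q\in\min\nCM(\widehat M)\}\cup\{\frak m\}$.
\end{itemize}
It also shows that closedness only requires Cohen--Macaulay formal fibers at primes of $\Supp_R(M)$ of dimension at least $3$. The paper's route, in exchange, ties the minimal non-Cohen--Macaulay primes of $M$ to those of the filtration factors and to the minimal primes of their supports. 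That structural information your argument does not provide.

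One small slip: $\Ass_R(M)=\bigcup_i\Ass_R({\cal D}_i)$ misses $\frak m$ when $H^0_{\frak m}(M)\neq 0$. You only use $\Supp_R(M)=\bigcup_i\Supp_R({\cal D}_i)$, which holds whenever $\dim M>0$.
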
  
\begin{proof} (a) Let $ \frak p\in\Supp_R(M).$ If $\frak p=\frak m$ then it is clear that $R/\frak p$ is unmixed and the formal fiber of $R$ at $\frak p$ is Cohen-Macaulay. If $\frak p\neq \frak m$ then $\frak p\in \Supp_R({\cal D}_i)$ for some $i.$ As  $\dim (R/\frak a({\cal D}_i))\leq 1$, it follows by Theorem \ref{T:1} that $R/\frak p$ is unmixed and the formal fiber of $R$ at $\frak p$ is Cohen-Macaulay.  Therefore $R/\Ann_R(M)$ is universally catenary and all of its formal fibers are Cohen-Macaulay. So, $R/\Ann_R(M)$ is a quotient of a Cohen-Macaulay local ring by Lemma \ref{L:1c}. The rest statement follows by Lemma \ref{P:1e}.
	
	(b)  Let $\frak p, \frak q\in\Supp_R(M)$  such that $\frak p\subset \frak q$. Then $\frak p\in\Supp_R({\cal D}_i)$ for some $i\leq t$. If $\dim (R/\frak p)\leq 1$ then it is clear that $R/\frak p$ is catenary. Assume that $\dim(R/\frak p)=2.$ Then the length of each saturated chain of prime ideals between $\frak p$ and $\frak m$ must be $2$. Therefore, $R/\frak p$ is catenary. Suppose that  $\dim (R/\frak p)>2$. Then $\dim (R/\frak p)>\dim (R/\frak a({\cal D}_i)).$ Hence $R/\frak p$ is unmixed by Theorem \ref{T:1}. Hence $R/\frak p$ is catenary. So, all saturated chains of prime ideals from $\frak p$ to $\frak q$ have the same finite length. Therefore, $R/\Ann_R(M)$ is catenary.
	
	In order to prove the closedness of $\nCM(M)$ under Zariski topology, it is enough to prove that $\min\nCM(M)$ is a finite set. Let $\frak p\in\min\nCM(M).$ If $\frak p=\frak m$ then $\nCM(M)=\{\frak m\}$ which is closed. So we can assume that $\frak p\neq \frak m$. Note that $(D_t)_{\frak p}=0$ and $M_{\frak p}$ is not Cohen-Macaulay. Therefore, we can choose  the least integer $i$ such that $1\leq i\leq t$ and $(D_i)_{\frak p}\neq M_{\frak p}.$  Then $(D_{i-1})_{\frak p}=M_{\frak p}$ and $(D_{i-1})_{\frak p}\neq (D_i)_{\frak p}$. So,  $\frak p\in\Supp_R(D_{i-1}/D_i).$
	
	We consider two cases.
	
	\noindent{\it Case 1: $(D_i)_{\frak p}=0$.} We claim that $\frak p\in\min\nCM(D_{i-1}/D_i).$ Suppose in contrary that $\frak p\notin\min\nCM(D_{i-1}/D_i).$ Since $(D_i)_{\frak p}=0$, we have
	$$M_{\frak p}=(D_{i-1})_{\frak p}=(D_{i-1}/D_i)_{\frak p}.$$ Since $\frak p\in\nCM(M),$ we have $\frak p\in\nCM(D_{i-1}/D_i).$  Since $\frak p\notin\min\nCM(D_{i-1}/D_i),$ there exists $\frak q\in\nCM (D_{i-1}/D_i)$ such that $\frak q\subset \frak p$ and $\frak q\neq \frak p$. As $\frak p\in\min\nCM(M),$ we have $\frak q\notin\nCM(M)$. Note that $(D_i)_{\frak q}=((D_i)_{\frak p})_{\frak q R_{\frak p}}=0$. So we get that
	$$M_{\frak q}=(M_{\frak p})_{\frak q R_{\frak p}}=((D_{i-1})_{\frak p})_{\frak q R_{\frak p}}=(D_{i-1})_{\frak q}=(D_{i-1}/D_i)_{\frak q}.$$ Since $\frak q\in\nCM (D_{i-1}/D_i)$, we have $\frak q\in\nCM(M).$  This gives a contradiction.
	
	\noindent{\it Case 2: $(D_i)_{\frak p}\neq 0$.} We claim that $\frak p\in\min\Supp_R(D_{i-1}/D_i).$	 Suppose in contrary that $\frak p\notin\min\Supp_R(D_{i-1}/D_i).$ Let $\frak q\in\Supp_R(D_{i-1}/D_i)$ such that $\frak q\subset \frak p$ and $\frak q\neq \frak p$. Since $\frak q\in\Supp_R(D_{i-1}/D_i)$, we have $(D_{i-1})_{\frak q}\neq (D_i)_{\frak q}.$ Since $(D_{i-1})_{\frak p}=M_{\frak p}$ and $\frak q\subset \frak p$, we have $(D_{i-1})_{\frak q}=M_{\frak q}.$ So, it follows by the catenarity of $R/\Ann_R(M)$ that 
$$\dim_{R_{\frak q}}((D_i)_{\frak q})<\dim_{R_{\frak q}}((D_{i-1})_{\frak q})=\dim_{R_{\frak q}}(M_{\frak q}).$$
	 Note that $M_{\frak q}$ is Cohen-Macaulay since $\frak p\in\min\nCM(M).$ Hence $(D_i)_{\frak q}=0.$ This gives a contradiction. 
	 
	 From the above two cases, we have $$\min\nCM(M)\subseteq \bigcup_{i=1}^t\big(\min\nCM({\cal D}_i)\cup \min \Supp_R({\cal D}_i)\big).$$
	 Therefore $\min\nCM(M)$ is a finite set by Corollary \ref{C:2b}. Thus, $\nCM(M)$ is closed under Zariski topology.
	\end{proof}

		Finally, we give examples to illustrate the main results. 	Consider the case of non-unmixed Noetherian local domains.
		
	\begin{example} {\rm Let $d\geq 2$ be an integer. Let $(R,\frak m)$ be a non-unmixed Noetherian local domain of dimension $d$ (such a local domain exists, see \cite[Example 3.8]{ChNN}). Then there exists $\frak P\in\Ass (\widehat R)$ such that $\dim (\widehat R/\frak P)<d.$ Since $R$ is a domain and $\frak P\cap R\in\Ass(R)$, it follows that $\frak P\cap R=0.$ Hence, the generic formal fiber of $R$ is not Cohen-Macaulay. Therefore, $\dim (R/\frak a(R))=d$ by Theorem \ref{T:1}.}	
	\end{example}

	Consider the case of unmixed Noetherian local domains.
	\begin{example} \label{E:2} {\rm Let $d\geq 3$ be an integer and let $K$ be a field of characteristic $0$. We denote by  $S=K[[x_1, \ldots x_{2d-1}]]$  the ring of formal power series in $2d-1$ variables over $K$. Set $T=S/I\cap J$, where $I=(x_1, x_2,\ldots , x_{d-1})S$ and $J=(x_d, x_{d+1}, \ldots , x_{2d-2})S.$ Consider the two subsets  $W_1:=\Ass (T)$ and $W_2:=\{\frak A\in\Spec(T)\mid \frak A\subseteq (I+J)T\}$ of $\Spec(T)$.  Then there exist by \cite[Lemma 2.8, Theorem 3.1]{CL}  Noetherian local domains $(R_1,\frak m_1)$ and $(R_2,\frak m_2)$ such that for  each $i\in\{1, 2\}$, the following three properties satisfy:  $\widehat R_i\cong T$;  $W_i$ is the generic formal fiber of $R_i$; and for each non-zero prime ideal $\frak p$ of $R_i$, there exists uniquely a prime ideal $\frak P$ of $\widehat R_i$ such that $\frak P\cap R_i=\frak p$. Set $\frak P_1=(I+J)\widehat R_1$, $\frak P_2=(I+J)\widehat R_2,$ $\frak p_1=\frak P_1\cap R_1$ and $\frak p_2=\frak P_2\cap R_2.$ Then $\dim (R_i)=d$ for $i\in\{1,2\}$ and}
	\begin{itemize}
	\item[{\rm(a)}] {\rm $\dim(R_1/\frak a(R_1))=1$,  $\nCM(\widehat R_1)=\{\frak P_1, \frak m_1\widehat R_1\},$ $\nCM(R_1)=\{\frak p_1, \frak m_1\}$, $R_1$ is a quotient of a Cohen-Macaulay local ring, and $\dim \nCM(\widehat R_1)=\dim \nCM(R_1)=1.$}
	\item[{\rm (b)}] {\rm $\dim(R_2/\frak a(R_2))=d$,  $\nCM(\widehat R_2)=\{\frak P_2, \frak m_2\widehat R_2\},$ $\nCM(R_2)=\{\frak m_2\}$, the generic formal fiber of $R_2$ is not Cohen-Macaulay, $R_2$ is not a quotient of a Cohen-Macaulay local ring, $\dim \nCM(\widehat R_2)=1$ and $\dim \nCM(R_2)=0.$}
	\end{itemize}
	\end{example}
	\begin{proof} (a) From the exact sequence $0\to \widehat R_1\to \widehat R_1/I\widehat R_1\oplus \widehat R_1/J\widehat R_1\to \widehat R_1/\frak P_1\to 0$ with notice that $\widehat R_1/I\widehat R_1$, $\widehat R_1/J\widehat R_1$ are Cohen-Macaulay of dimension $d$ and $\widehat R_1/\frak P_1$ is Cohen-Macaulay of dimension $1$, we have $H^i_{\frak m_1\widehat R_1} (\widehat R_1)=0$ for $i\notin\{2, d\}$ and $H^2_{\frak m_1\widehat R_1} (\widehat R_1)\cong H^1_{\frak m_1\widehat R_1} (\widehat R_1/\frak P_1)$. Therefore, $\Att_{\widehat R_1}H^2_{\frak m_1\widehat R_1} (\widehat R_1)=\{\frak P_1\}$ by \cite[Theorem 7.3.2]{BS} and $H^i_{\frak m_1}(R_1)=0$ for $i\notin\{2, d\}$.  Hence $\Att_{R_1}H^2_{\frak m_1}(R_1)=\{\frak p_1\}$ by Lemma \ref{L:1a}(b). Since $\frak P_1\notin W_1,$ we have $\frak p_1\neq 0$ and hence $\dim (R_1/\frak p_1)=1.$ Therefore, 				
		 $\dim (\widehat R_1/\frak a(\widehat R_1))=1$ and $\dim (R_1/\frak a(R_1))=1.$  Hence $R_1$ is a quotient of a Cohen-Macaulay local ring by Theorem \ref{T:2}(a). Moreover, since $R_1$ and $\widehat R_1$ are equidimensional, we get by \cite[Corollary 4.2(v)]{CNN} that $\nCM(\widehat R_1)=\Var(\frak a(\widehat R_1))=\{\frak P_1, \frak m_1\widehat R_1\}$ and $\nCM(R_1)=\Var(\frak p_1)=\{\frak p_1, \frak m_1\}.$ Therefore, $\dim \nCM(\widehat R_1)=\dim \nCM(R_1)=1.$
		 
		 (b) With the same argurments as in (a), we have $H^i_{\frak m_2\widehat R_2} (\widehat R_2)=0$ for $i\notin\{2, d\}$  and $H^2_{\frak m_2\widehat R_2} (\widehat R_2)\cong H^1_{\frak m_2\widehat R_2} (\widehat R_2/\frak P_2)$. Therefore, $\Att_{\widehat R_2}H^2_{\frak m_2\widehat R_2} (\widehat R_2)=\{\frak P_2\}$ and $H^i_{\frak m_2}(R_2)=0$ for $i\notin\{2, d\}$.  Hence $\Att_{R_2}H^2_{\frak m_2}(R_2)=\{\frak p_2\}$. Since $\frak P_2\in W_2,$ we have $\frak p_2=0$ and hence $\dim (R_2/\frak p_2)=d.$ Therefore, 				
		 $\dim (\widehat R_2/\frak a(\widehat R_2))=1$ and $\dim (R_2/\frak a(R_2))=d.$  Hence, the generic formal fiber of $R_2$ is not Cohen-Macaulay by Theorem \ref{T:1}(a), so $R_2$ is not a quotient of a Cohen-Macaulay local ring. Since  $\widehat R_2$ is equidimensional, we get by \cite[Corollary 4.2(v)]{CNN} that $\nCM(\widehat R_2)=\Var(\frak a(\widehat R_2))=\{\frak P_2, \frak m_2\widehat R_2\}$. Therefore,  $\nCM(R_2)\subseteq \{\frak p_2, \frak m_2\}$  by Lemma \ref{L:1e}. Note that $\frak p_2\notin\nCM(R_2)$ since  $\frak p_2=0.$  So, $\nCM(R_2)=\{\frak m_2\}$. Therefore, $\dim \nCM(\widehat R_2)=1$ and $\dim \nCM(R_2)=0.$
		\end{proof}

\end{document}